\theoremstyle{plain}
\newtheorem{thm}{Theorem}[section]
\newtheorem{lemma}[thm]{Lemma}
\newtheorem{prop}[thm]{Proposition}
\newtheorem*{theorem*}{Theorem}
\theoremstyle{definition}
\newtheorem{rem}[thm]{Remark}
\newtheorem{dfn}[thm]{Definition}
\numberwithin{equation}{section}
\newcommand{\mytilde}{\raise.17ex\hbox{$\scriptstyle\mathtt{\sim}$}}
\newcommand{\e}{\varepsilon} 
\newcommand{\wk}{W^{1,p_{k}(\cdot)}(\Omega)}
\newcommand{\olgradk}{\int_{\Omega}|\nabla u_k|^{p_{k}(x)}\,dx}
\newcommand{\olgradpm}{\int_{\Omega}|\nabla u_k|^{p_{-}}\,dx}
\newcommand{\olsin}{\int_{\partial \Omega}}
\newcommand{\f}{\varphi}
\newcommand{\omk}{\overline \Omega}
\newcommand{\nk}[1]{\| {#1} \|_{1,p_{k}(\cdot)}}
\renewcommand{\r}{\rightarrow}
\newcommand{\nrm}[1]{\|{#1}\|}
\begin{document}
\title{A Neumann  problem involving the $p(x)$-Laplacian with $p=\infty$ in a subdomain}

\author{Y. Karagiorgos}
\address{Department of Mathematics, National Technical University of Athens, 
Zografou campus, Athens 15780, Greece}
\email{ykarag@math.ntua.gr}


\author{N. Yannakakis}

\address{Department of Mathematics, National Technical University of Athens, 
Zografou campus, Athens 15780, Greece}
\email{nyian@math.ntua.gr}

\begin{abstract}
In this paper we study a Neumann problem with non-homogeneous boundary condition, where the  $p(x)$-Laplacian is involved and $p=\infty$ in a subdomain. By considering a suitable sequence $p_k$ of bounded  variable exponents such that $p_k \to p$ and replacing $p$ with $p_k$ in the original problem, we prove the existence of a solution $u_k$ for each of those intermediate ones. We show that the limit of the $u_k$ exists and after giving a variational characterization of it, in the part of the domain where $p$ is bounded, we show that it is a viscosity solution in the part where $p=\infty$. Finally, we formulate the problem of which this limit function is a solution in the viscosity sense.
\end{abstract}
\keywords{Neumann problem, variable exponent, $p(x)\!-\!Laplacian$, viscosity solution, infinity Laplacian, infinity harmonic function.}

\subjclass[2000]{ 35J20, 35J60, 35J70}.

\maketitle
\section{Introduction}
Consider  the following Neumann problem
\begin{equation} \label{orgproblem}
 \begin{cases} -\Delta_{p(x)}u(x)=0, & x\in \Omega  \\ |\nabla u(x)|^{p(x)-2}\frac{\partial u}{\partial n}(x)=g(x), & x \in \partial \Omega \end{cases}
\end{equation}
where  $\Omega \subset \mathbb{R}^N$ is a bounded smooth domain and $N\geq2$.
\[\Delta_{p(x)}u:=\text{div}(|\nabla u|^{p(x)-2}\nabla u)
\]  is the $p(x)$-Laplacian operator  which is the variable exponent version of the $p$-Laplacian. Also, $g \in C(\overline \Omega)$ and satisfies $\int_{\partial\Omega}g=0$. Note that this latter condition is necessary, since otherwise problem (1.1) has no solution.

The  variable exponent $p$ satisfies the following hypothesis
\begin{equation}\label{p|D=inf}
 p|_{D}=\infty
 \end{equation}
where $D$ is a compactly supported subdomain of $\Omega$,  with Lipschitz boundary.

Moreover, $p \in C^{1}(\overline{\Omega}\setminus D)$ with
\begin{equation}\label{p+cond}
 p^{+}:=\sup_{\overline{\Omega}\setminus \overline{D}} p(x)<\infty
\end{equation}
and
\vspace{-3mm}
\begin{equation}\label{p->N}
  p_{-}:=\inf_{\overline{\Omega}}p(x)>N
\end{equation}

In the literature, most of the times the variable exponent $p(\cdot)$ is assumed to be bounded. Recently, the limits $p(x)\to\infty$ have been studied in several problems where the $p(x)$-Laplacian is involved. See for instance \cite{jM10} or \cite{mP011} and the references therein. On the other hand, when $p$ is constant the limits $p\to\infty$ in problems with the $p$-Laplacian  were first studied in \cite{tB89}, in which the physical motivation was given as well. On both cases the notion of infinity Laplacian arises naturally as the limit case.

In \cite{mP011} the authors considered problem (\ref{orgproblem}) and studied the limits as $p_{n}(x)\r \infty$ uniformly in $\Omega$, where $(p_n)_n$ was a sequence of variable exponents.
J.J. Manfredi, et.al in \cite{jM09} considered condition (\ref{p|D=inf}) for the first time to study the Dirichlet problem with Lipschitz boundary conditions. To the best of our knowledge this is the first time that condition (\ref{p|D=inf}) is considered in a Neumann problem involving the $p(x)$-Laplacian.

To find out what a solution of (\ref{orgproblem}) might be, we follow the same strategy that is used  in \cite{jM09}. To be more specific we consider
a sequence of bounded variable exponents $p_k$ such that $p_k(x)=\min\{p(x), k\}$. Then $p_k(x)\r p(x)$ as $k\r\infty$, while for $k>p^+$ we have that
\begin{equation}
p_{k}(x)=\begin{cases} p(x),  & x\in \overline{\Omega}\setminus D  \\ k, & x \in D \end{cases}
\end{equation}
\begin{rem}
In \cite{jM09} the set $D$ is assumed to be convex with smooth boundary. The main reason for this is that the set of Lipcshitz function on $D$ and $W^{1,\infty}(D)$ coincide. In our case we only assume that $D$ has Lipschitz boundary which we need to have the density of smooth functions in $W^{1, p_k(\cdot)}(\Omega)$ by Proposition ~\ref{prop:density}.

\end{rem}
\begin{rem}
Note that for $k>p^+$, the boundary of the set ${\{x:p(x)>k \}}$ coincides with the boundary of $D$ and so is independent of $k$. Due to this fact we have no problems when passing to the limit as $k\to \infty$.
\end{rem}

If we replace $p$ with $p_k$ in problem (\ref{orgproblem}) we have the intermediate boundary value problems.
\[
 \begin{cases} -\Delta_{p_k(x)}u(x)=0, & x\in \Omega  \\ |\nabla u(x)|^{p_k(x)-2}\frac{\partial u}{\partial n}(x)=g(x), & x \in \partial \Omega \end{cases}
 \hspace{40mm} (1.\text{k})
\]

 Using standard methods we prove the existence of a unique weak solution $u_k$, for problem (1.k), that is also a viscosity solution.  From the Arzel\`{a}-Ascoli theorem, we then show that the  uniform limit of $(u_k)$ exists.
 We call this uniform limit $u_{\infty}$ and  show that it satisfies   a variational characterization in the set
\[
S=\bigg\{u \in W^{1, p_{-}}(\Omega):u|_{\Omega\setminus\overline{D}}\in W^{1,p(x)}(\Omega\setminus\overline{D}), \|\nabla u\|_{L^{\infty}(D)}\leq 1\, \text{and}\, \int_{\Omega}u=0  \bigg\}
\]
and that it is infinity harmonic in $D$; that is, satisfies the equation $\Delta_{\infty}u=0$, in the viscosity sense,  where
\[
\Delta_{\infty}u:= \sum_{i,j=1}^{N}\frac{\partial u}{\partial x_i}\frac{\partial u}{\partial x_j}\frac{\partial^2 u}{\partial x_{i}x_{j}}.
\]
\begin{rem} \label{rem:infinitylaplace}
Note that the infinity Laplace operator is in non-divergence form and the notion of  weak solution does not make sense in this case. To give a meaning to a solution of the equation $\Delta_{\infty}u=0$ that is not $C^2$ we need the notion of viscosity solution.
\end{rem}

\begin{rem}\label{rem:integralacond}
The condition $\int_{\Omega}u=0$ in the definition of $S$, plays a crucial role in the proof of the existence and uniqueness of the solutions $u_k$ and also in their uniform boundedness.
\end{rem}
\begin{rem}\label{rem:dirichlet}
In the Dirichlet case things are different. The existence of $u_{\infty}$ as a uniform limit of the sequence $(u_k)$  depends on the Lipschitz constant of the boundary condition and on the geometry of $D$ in $\Omega$. For reference see \cite{jM09}, \cite{jU11} and \cite{jK011}.
\end{rem}

The main results of this paper are Theorems ~\ref{thm:charact-of-u_inf} and ~\ref{thm:limitcase}. On the first one, we give a variational meaning to $u_\infty$ in $\overline\Omega\backslash D$,  where $p(\cdot)$ is bounded and  next we prove that $u_\infty$ is infinity harmonic in $D$, where $p=\infty$. On the second one, we formulate the problem (as a limit case) of which $u_\infty$ is a solution in the viscosity sense.



Partial Differential Equations involving the $p(x)$-Laplacian appear in a variety of applications.  In \cite{yC06} the authors proposed a framework for image restoration based on a variable exponent Laplacian. This was the starting point for the research on the connection between PDE's with variable exponents and image processing. Recently there has been quite a rapid progress in this direction.\footnote{The reader can visit the website \href{url}{http://www.helsinki.fi/\mytilde pharjule/varsob/index.shtml}  for further details.} Other applications that use variable exponent type Laplacians are elasticity theory and the modelling of electrorheological fluids (see \cite{mR00}).

Infinity harmonic functions (in the classical sense) were first studied by G. Arronson (see \cite{gA65, gA67}). Arronson studied the connection between infinity harmonic functions and optimal Lipschitz extensions, but only for $C^2$ functions. When the viscosity theory appeared,  Crandall, Evans and Gariepy (see \cite{mG01} or the survey paper \cite{gA04}) used viscosity solutions to prove that the connection still holds. Note that, infinity harmonic functions appear in several applications such as optimal transportation (see \cite{lE93, jC07}), image processing (see \cite{vC98}) and tug of war games (see \cite{yP09}).

\section{Preliminaries}
In this section we give some basic properties of the variable exponent Lebesgue and Sobolev spaces. For details the interested reader should refer to \cite{oK91}, \cite{xF01} and \cite{lD11}.

Let $ L^{0}(\Omega)$ be the space of real valued measurable functions in $\Omega$ and $p\colon\Omega\to[1,\infty]$ a measurable function.  We define the variable exponent Lebesgue space as
\[
L^{p(\cdot)}(\Omega)=\bigg\{u\in L^{0}(\Omega): \int_{\Omega}|\lambda\, u(x)|^{p(x)}dx <\infty,\,\, \text{for some }\,\,\lambda>0   \bigg\}
\]
equipped with the norm
\[
\|u\|_{L^{p(\cdot)}(\Omega)}:=\|u\|_{p(\cdot)}=\inf\bigg\{\lambda>0 :\int_{\Omega}\bigg|\frac{u(x)}{\lambda}\bigg| ^{p(x)}dx\leq 1 \bigg\}.
\]
The variable exponent Sobolev space  is defined by
\[
W^{1,p(\cdot)}(\Omega)=\bigg\{u\in L^{p(\cdot)}(\Omega): \nabla u \in L^{p(\cdot)}(\Omega, \mathbb{R}^N)   \bigg\}
\]
with norm
\[
\|u\|_{1,p(\cdot)}=\inf\bigg\{\lambda>0 :\int_{\Omega}\bigg|\frac{u(x)}{\lambda}\bigg|^{p(x)}+ \bigg|\frac{\nabla u(x)}{\lambda}\bigg|^{p(x)} dx\leq 1 \bigg\}.
\]
The spaces $(L^{p(\cdot)}(\Omega), \|\cdot\|_{p(\cdot)}) $, $(W^{1,p(\cdot)}(\Omega),\|\cdot\|_{1,p(\cdot)})$ are Banach spaces and if
\[
1<p_{-}:=\text{ess}\inf_{x\in\Omega}p(x)\leq p^{+}:=\text{ess}\sup_{x\in\Omega}p(x)<\infty,
\]
they are also separable and reflexive.

When p is constant, it is well known that smooth functions are dense in $W^{1,p}(\Omega)$. This is no longer true when we are dealing with the variable exponent spaces, see \cite{dE92, sS00, lD11}.  In fact, we have to consider additional conditions for the variable exponent. The most prevalent  is the so called \textit{$\log$-H\"{o}lder} continuity, i.e, there exists $C>0$, such that
\[
|p(x)-p(y)|\leq \frac{C}{\log(e + \frac{1}{|x-y|})}, \quad \text{for} \,\,x,y \in\Omega.
\]

However, it turns out that we can have the  density of smooth functions  in  some cases of discontinuous exponents (see \cite[section 9.3]{lD11}). In our case with the variable exponent $p_k$ as defined in Section 1, the following holds.
\begin{prop} \label{prop:density}
The space $C^{\infty}(\overline{\Omega})$ is dense in $W^{1,p_k(\cdot)}(\Omega)$.
\end{prop}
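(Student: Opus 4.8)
The plan is to reduce the statement to the known density results for variable-exponent Sobolev spaces whose exponent is log-H\"older continuous, exploiting the special piecewise structure of $p_k$. For $k>p^{+}$ we have $p_k=p$ on $\overline{\Omega}\setminus D$ and $p_k\equiv k$ on $D$, so $p_k$ is bounded with $1<p_{-}\le p_k\le k<\infty$; in particular $W^{1,p_k(\cdot)}(\Omega)$ is a separable reflexive Banach space, and the only place where $p_k$ fails to be continuous is the interface $\partial D$, where the entire difficulty is concentrated.

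First I would record that, away from $\partial D$, the exponent is as regular as one could wish: on $\overline{\Omega}\setminus D$ it equals $p\in C^{1}(\overline{\Omega}\setminus D)$, hence is Lipschitz and \emph{a fortiori} log-H\"older continuous, while on $\overline{D}$ it is the constant $k$, which is trivially log-H\"older. Consequently, on any ball compactly contained in either $D$ or $\Omega\setminus\overline{D}$ the standard convolution and maximal-function estimates available under log-H\"older continuity apply, and mollification of a given $u\in W^{1,p_k(\cdot)}(\Omega)$ produces smooth local approximants converging in the $\|\cdot\|_{1,p_k(\cdot)}$-norm. The smooth piece $\partial\Omega$ is treated exactly as in the constant-exponent theory: flatten the boundary, translate $u$ slightly inward, and mollify.

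The heart of the matter is a neighborhood of $\partial D$. Here I would use that $\partial D$ is Lipschitz to introduce, locally, a bi-Lipschitz change of coordinates flattening $\partial D$ onto a piece of hyperplane, so that near the interface $\Omega$ becomes a product region split into a part where $p_k\equiv k$ and a part where $p_k=p$ is log-H\"older. On the flattened picture I would regularize by a one-sided mollification: translate in the direction normal to the flattened interface so that the smoothed function samples values from only one side of $\partial D$, thereby never averaging across the exponent jump, and then mollify. Since $u$ is a genuine Sobolev function on all of $\Omega$, its traces from the two sides of $\partial D$ coincide, which is what keeps the two one-sided approximants compatible in the limit; and because inside $D$ the exponent $k$ is the \emph{larger} value ($k>p^{+}\ge p$), the higher integrability there controls the interface error and blocks any Lavrentiev-type gap. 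Patching the local approximants with a partition of unity subordinate to a finite cover of $\overline{\Omega}$ then yields the desired $v\in C^{\infty}(\overline{\Omega})$.

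The main obstacle is precisely this interface step: a mollifier supported on a ball crossing $\partial D$ would mix the two exponent regimes and could destroy the $\|\cdot\|_{p_k(\cdot)}$ convolution estimate, so one must replace it by a translated, one-sided regularization and verify the resulting $L^{p_k(\cdot)}$ bound on the error. This is exactly the configuration --- log-H\"older continuity on each member of a finite Lipschitz partition of $\Omega$ --- analyzed in \cite[Section 9.3]{lD11}, so once the local flattening of $\partial D$ is set up I would either adapt that machinery directly or invoke it to conclude.
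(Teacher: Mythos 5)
Your proposal is correct and follows essentially the same route as the paper: both reduce the statement to the density result for exponents that are log-H\"older continuous on each member of a finite partition of $\Omega$ into Lipschitz subdomains (Theorem 9.3.5 in Section 9.3 of \cite{lD11}), applied with $\Omega_1=\Omega\setminus\overline{D}$ and $\Omega_2=D$. You additionally sketch the internal mechanics of that theorem (flattening $\partial D$ and one-sided mollification), but the paper simply cites it directly.
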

\begin{proof}
This is straightforward, if we use Theorem 9.3.5 of \cite[p. 298]{lD11} with\\ $\Omega_1=\Omega\backslash\overline{D}$ and $\Omega_2=D$, where each of $\Omega_i, i=1,2$ has Lipschitz boundary.
\end{proof}

\begin{prop}
Let $p\colon\Omega\to\mathbb{R}$ be a measurable function. The dual space of $(L^{p(\cdot)}(\Omega), \|\cdot\|_{p(\cdot)})$ is the space $(L^{q(\cdot)}(\Omega), \|\cdot\|_{q(\cdot)})$  where $\frac{1}{p(x)}+\frac{1}{q(x)}=1$ and the variable exponent version of H\"{o}lder inequality holds, namely
\[
\int_{\Omega}|u(x)v(x)|\,dx\leq 2 \|u\|_{p(\cdot)} \|v\|_{q(\cdot)},\quad \text{for all}\,\, u \in L^{p(\cdot)}(\Omega), v \in L^{q(x)}(\Omega).
\]
\end{prop}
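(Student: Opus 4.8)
The statement bundles two independent assertions: the pointwise-to-integral Hölder inequality with constant $2$, and the identification of the dual space. I would prove the Hölder inequality first, since it is elementary and is in any case the tool needed for the duality.

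The plan for the inequality is to reduce to the numerical Young inequality. If either $\|u\|_{p(\cdot)}=0$ or $\|v\|_{q(\cdot)}=0$ the estimate is trivial, so assume both norms are positive and set $\tilde u=u/\|u\|_{p(\cdot)}$ and $\tilde v=v/\|v\|_{q(\cdot)}$. The structural fact I would invoke about the Luxemburg norm is its unit-ball property, $\int_\Omega|\tilde u(x)|^{p(x)}\,dx\le 1$ and $\int_\Omega|\tilde v(x)|^{q(x)}\,dx\le 1$, which follows from the definition of the defining infimum together with left-continuity of $\lambda\mapsto\int_\Omega|u/\lambda|^{p(x)}\,dx$ (Fatou/monotone convergence as $\lambda\downarrow\|u\|_{p(\cdot)}$). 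On the set where $1<p(x)<\infty$ I would apply, for a.e.\ $x$, the inequality $ab\le \frac{a^{p(x)}}{p(x)}+\frac{b^{q(x)}}{q(x)}$ with $a=|\tilde u(x)|$, $b=|\tilde v(x)|$, and use $1/p(x)\le 1$, $1/q(x)\le 1$ (valid since $p(x),q(x)\ge 1$) to integrate and obtain
\[
\int_\Omega|\tilde u\,\tilde v|\,dx\le \int_\Omega|\tilde u|^{p(x)}\,dx+\int_\Omega|\tilde v|^{q(x)}\,dx\le 2 .
\]
Rescaling by the two norms yields the claim; the constant $2$ is precisely the cost of the two modular terms. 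The degenerate set where $p(x)=1$, $q(x)=\infty$ is handled separately by the trivial bound $|uv|\le\|v\|_{L^\infty}|u|$ there.

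For the duality, one inclusion is free: the Hölder inequality just proved shows that every $v\in L^{q(\cdot)}(\Omega)$ induces a bounded functional $F_v(u)=\int_\Omega uv\,dx$ on $L^{p(\cdot)}(\Omega)$ with operator norm $\le 2\|v\|_{q(\cdot)}$, so $L^{q(\cdot)}(\Omega)\hookrightarrow \big(L^{p(\cdot)}(\Omega)\big)^{\ast}$. The substance is surjectivity, and here I would mimic the classical $L^p$ argument via Radon--Nikodym: given $F\in\big(L^{p(\cdot)}(\Omega)\big)^{\ast}$, the set function $\nu(E)=F(\chi_E)$ is finitely additive, and — using dominated convergence in $L^{p(\cdot)}$, which is legitimate because $\Omega$ is bounded and $p^{+}<\infty$ in the cases of interest — countably additive and absolutely continuous with respect to Lebesgue measure. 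Radon--Nikodym then furnishes a density $v$ with $F(u)=\int_\Omega uv$, first on characteristic functions, then on simple functions, and finally on all of $L^{p(\cdot)}(\Omega)$ by density of bounded functions.

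The main obstacle is the last leg: verifying that the density $v$ produced by Radon--Nikodym genuinely lies in $L^{q(\cdot)}(\Omega)$ and that $\|v\|_{q(\cdot)}$ is controlled by $\|F\|$. This is the variable-exponent analogue of the associate-norm computation, and it is exactly the point at which the finiteness of $p$ (more precisely $p^{+}<\infty$, which holds for the exponents $p_k$ used throughout this paper) is essential: it makes simple functions dense and the modular comparable to the norm, so that testing $F$ against suitably truncated multiples of $|v|^{q(x)-2}v$ bounds $\|v\|_{q(\cdot)}$ by $\|F\|$. For a fully general measurable $p$ I would instead simply cite \cite{oK91} or \cite{lD11}, where this identification is carried out in complete detail.
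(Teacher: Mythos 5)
The paper offers no proof of this proposition at all: it is stated as a standard preliminary, with the reader implicitly referred to \cite{oK91}, \cite{xF01} and \cite{lD11}, so there is no in-paper argument to compare yours against step by step. What you propose is the standard textbook proof and it is essentially sound. The H\"older part --- normalise, use the unit-ball property of the Luxemburg norm (obtained from left-continuity of the modular in $\lambda$ via monotone convergence), apply pointwise Young's inequality, bound $1/p(x)$ and $1/q(x)$ by $1$, and integrate --- is complete and produces the constant $2$ exactly as stated, with the set $\{p(x)=1\}$ correctly split off. For the duality, your Radon--Nikodym outline is the right one, and you are right to isolate the associate-norm step (showing the density $v$ actually lies in $L^{q(\cdot)}(\Omega)$ with $\|v\|_{q(\cdot)}$ controlled by $\|F\|$) as the only genuinely nontrivial point; deferring that computation to \cite{oK91} or \cite{lD11} is what the authors themselves do by giving no proof. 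One remark you make deserves emphasis: as literally stated, for a merely measurable real-valued $p$ the duality assertion is not true in general --- one needs $p^{+}<\infty$ (otherwise simple functions fail to be dense and $L^{q(\cdot)}(\Omega)$ is a proper subspace of the dual). You correctly identify this as the hypothesis doing the work, and it does hold for the exponents $p_{k}$ to which the proposition is applied in this paper.
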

The next proposition is very important in the proof of the existence of a solution for problem (1.k) (see Lemma 3.2).
\begin{prop}
There exists $C>0$ such that the following Poincar\'{e} type inequality holds
\begin{equation}\label{poincare}
\|u\|_{1,p_k(\cdot)}\leq C \|\nabla u\|_{L^{p_k(\cdot)}}\,, \quad \text{for all}\,\, u \in W^{1,p_k(\cdot)}(\Omega)\,\,\text{s.t}\,\,\int_{\Omega}u=0.
\end{equation}
\end{prop}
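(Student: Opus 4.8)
The plan is to reduce the claimed inequality to a Poincar\'{e}--Wirtinger estimate involving the gradient norm alone, and then to establish that estimate by a standard compactness--contradiction argument whose only non-trivial ingredient is a Rellich--Kondrachov type compact embedding, which is available here precisely because $p_{-}>N$. First I would recall that the Luxemburg-type norm $\|u\|_{1,p_k(\cdot)}$ is equivalent to the sum $\|u\|_{p_k(\cdot)}+\|\nabla u\|_{p_k(\cdot)}$ (a standard fact for bounded exponents with $p_{-}>1$; note that $p_k\le k<\infty$). Hence it suffices to produce a constant $C$ with
\[
\|u\|_{p_k(\cdot)}\le C\,\|\nabla u\|_{p_k(\cdot)}\qquad\text{for every }u\in W^{1,p_k(\cdot)}(\Omega)\text{ with }\int_{\Omega}u=0,
\]
since then $\|u\|_{1,p_k(\cdot)}\le(1+C)\|\nabla u\|_{p_k(\cdot)}$.

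I would prove this gradient estimate by contradiction. If no such $C$ exists, then there is a sequence $(u_n)$ with $\int_{\Omega}u_n=0$, $\|u_n\|_{p_k(\cdot)}=1$ and $\|\nabla u_n\|_{p_k(\cdot)}\to 0$. This sequence is bounded in $W^{1,p_k(\cdot)}(\Omega)$, which is reflexive because $p_k$ is bounded with $p_{-}>1$, so along a subsequence $u_n\rightharpoonup u$ weakly. Weak lower semicontinuity of the norm then gives $\|\nabla u\|_{p_k(\cdot)}\le\liminf_n\|\nabla u_n\|_{p_k(\cdot)}=0$, so $\nabla u=0$ and $u$ is constant on the connected set $\Omega$.

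The crux is to upgrade weak convergence to strong convergence in $L^{p_k(\cdot)}(\Omega)$ via compactness of the embedding $W^{1,p_k(\cdot)}(\Omega)\hookrightarrow L^{p_k(\cdot)}(\Omega)$; this yields $\|u\|_{p_k(\cdot)}=1$ and, since $u_n\to u$ in $L^1(\Omega)$, also $\int_{\Omega}u=0$, forcing the constant $u$ to vanish and contradicting $\|u\|_{p_k(\cdot)}=1$. The hypothesis $p_{-}>N$ is exactly what furnishes this compactness: because $p_k(x)\ge p_{-}$ a.e., one has the continuous inclusion $W^{1,p_k(\cdot)}(\Omega)\hookrightarrow W^{1,p_{-}}(\Omega)$, the Morrey embedding $W^{1,p_{-}}(\Omega)\hookrightarrow C(\overline{\Omega})$ is compact for $p_{-}>N$, and $C(\overline{\Omega})\hookrightarrow L^{p_k(\cdot)}(\Omega)$ is continuous on the bounded domain $\Omega$ (since $p_k\le k$); composing these gives the required compact embedding, so that in fact $u_n\to u$ uniformly.

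The main obstacle is this compact embedding itself: since $p_k$ is discontinuous across $\partial D$, the usual $\log$-H\"{o}lder based variable-exponent Rellich--Kondrachov theory does not apply directly, so I would deliberately route the argument through the constant exponent $p_{-}$ as above rather than through variable-exponent Sobolev embeddings. A secondary point worth flagging is whether $C$ can be chosen independent of $k$, which is what the later uniform-boundedness argument requires: the contradiction scheme a priori produces only $C=C_k$, and obtaining uniformity would mean tracking the constant in the $p_{-}$ embedding together with the elementary bound $\|\nabla u\|_{L^{p_{-}}}\lesssim\|\nabla u\|_{p_k(\cdot)}$, whose constant depends only on $|\Omega|$ and $p_{-}$.
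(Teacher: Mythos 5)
Your argument is correct, but it is a genuinely different route from the paper's: the paper disposes of this proposition in one line by invoking Theorem 8.2.14 of Diening--Harjulehto--H\"ast\"o--R\r{u}\v{z}i\v{c}ka, a ready-made modular Poincar\'e inequality for variable exponents, whereas you give a self-contained compactness--contradiction proof. Your chain of embeddings $W^{1,p_k(\cdot)}(\Omega)\hookrightarrow W^{1,p_{-}}(\Omega)\hookrightarrow C(\overline{\Omega})\hookrightarrow L^{p_k(\cdot)}(\Omega)$ is sound (the middle one is compact since $p_{-}>N$ and $\Omega$ is a bounded smooth, hence connected Lipschitz, domain; the outer two have constants depending only on $|\Omega|$ and $p_{-}$ because $p_{-}\le p_k\le k$), reflexivity of $W^{1,p_k(\cdot)}(\Omega)$ holds since $1<p_{-}\le p_k\le k<\infty$, and weak lower semicontinuity of the Luxemburg norm finishes the contradiction; so the proof stands. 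The one point that deserves more than a flag is the $k$-dependence of the constant: the later proof of Lemma 3.3 (equicontinuity and uniform boundedness of $(u_k)$) explicitly needs $C$ independent of $k$, and the contradiction scheme cannot deliver that, since it would require a diagonal argument over varying exponents. The remedy is exactly the direct estimate you sketch in your last sentence, and it is worth observing that it supersedes the compactness argument entirely: Morrey's inequality plus $\int_\Omega u=0$ gives $\|u\|_{C(\overline{\Omega})}\le C(N,p_{-},\Omega)\,\|\nabla u\|_{L^{p_{-}}(\Omega)}$, the embedding $L^{p_k(\cdot)}(\Omega)\hookrightarrow L^{p_{-}}(\Omega)$ has a constant controlled by $1+|\Omega|$ uniformly in $k$, and $\|u\|_{L^{p_k(\cdot)}(\Omega)}\le \max(1,|\Omega|)\,\|u\|_{C(\overline{\Omega})}$; chaining these yields the inequality with a constant depending only on $N$, $p_{-}$ and $\Omega$, which is what the rest of the paper actually uses. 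So what your compactness route buys is elementarity and independence from the variable-exponent literature, at the cost of quantitative control; the direct route recovers that control and is the one you should promote from an afterthought to the main proof.
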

\begin{proof}
Apply Theorem 8.2.14 in \cite[p. 256]{lD11}.
\end{proof}
\begin{rem}\label{rem:poincare}
In our case the variable exponent $ p_k(\cdot)$ for $k>p^+$ satisfies,
\begin{equation}
p_k(\cdot)\geq (p_k)_{-}\geq p_{-}>N
\end{equation}
 so  inequality (\ref{poincare}) holds and the norms $\|u\|_{1,p_k(\cdot)},\! \|\nabla\! u\|_{p_k(\cdot)}$ are equivalent in the set
\[
\bigg\{ u\in W^{1,p_k(\cdot)}(\Omega): \int_{\Omega}u(x)\,dx=0 \bigg\}.
\]
\end{rem}

\begin{prop} \label{prop:embeddings}
Let $p$ be a variable exponent such that $p_{-}>N$. Then the following holds
\begin{enumerate}
\item[(i)]
$W^{1,p(\cdot)}(\Omega)\hookrightarrow W^{1,p_{-}}(\Omega)\hookrightarrow C(\overline{\Omega})$.

\item[(ii)]
If $q \in C(\partial\Omega)$, the  embedding
\[
W^{1,p(\cdot)}(\Omega)\hookrightarrow L^{q(\cdot)}(\partial\Omega),
\]
is compact and continuous.
\end{enumerate}
\end{prop}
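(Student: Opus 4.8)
The plan is to chain three elementary embeddings, all resting on the hypothesis $p_->N$ and on $\Omega$ being a bounded smooth (hence Lipschitz, finite measure) domain, and then to deduce the trace statement in (ii) by composing the resulting map into $C(\overline{\Omega})$ with the restriction to $\partial\Omega$. For the first inclusion in (i) I would invoke the standard embedding between variable exponent Lebesgue spaces on a set of finite measure: whenever $q(x)\le p(x)$ a.e. and $|\Omega|<\infty$, one has $L^{p(\cdot)}(\Omega)\hookrightarrow L^{q(\cdot)}(\Omega)$ with a constant depending only on $|\Omega|$. This follows at once from the pointwise bound $t^{q(x)}\le 1+t^{p(x)}$ for $t\ge 0$ (split into $t\le1$ and $t\ge1$) integrated over $\Omega$, or from the variable exponent H\"older inequality of Proposition~2.2. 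Taking $q\equiv p_-$ constant and applying this to $u$ and to each component of $\nabla u$ gives $W^{1,p(\cdot)}(\Omega)\hookrightarrow W^{1,p_-}(\Omega)$.

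For the second inclusion in (i), since $p_->N$ and $\Omega$ is Lipschitz, Morrey's inequality yields $W^{1,p_-}(\Omega)\hookrightarrow C^{0,\alpha}(\overline{\Omega})$ with $\alpha=1-N/p_-\in(0,1)$, and as $C^{0,\alpha}(\overline{\Omega})\hookrightarrow C(\overline{\Omega})$ is trivially continuous, part (i) is complete.

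The content of (ii) is compactness, so here I would argue as follows. The embedding $C^{0,\alpha}(\overline{\Omega})\hookrightarrow C(\overline{\Omega})$ is in fact compact: a bounded family in $C^{0,\alpha}(\overline{\Omega})$ is uniformly bounded and equicontinuous, so the Arzel\`a--Ascoli theorem supplies uniformly convergent subsequences. Composing with the two continuous embeddings of the previous paragraph shows that $W^{1,p(\cdot)}(\Omega)\hookrightarrow C(\overline{\Omega})$ is compact. The restriction $C(\overline{\Omega})\to C(\partial\Omega)$, $u\mapsto u|_{\partial\Omega}$, is continuous, and finally $C(\partial\Omega)\hookrightarrow L^{q(\cdot)}(\partial\Omega)$ is continuous because $\partial\Omega$ has finite surface measure and $q$ is bounded, being continuous on the compact set $\partial\Omega$: if $\|u\|_{C(\partial\Omega)}\le M$ then $\int_{\partial\Omega}|u/\lambda|^{q(x)}\,d\sigma\le 1$ as soon as $\lambda\ge M\max\{1,|\partial\Omega|^{1/q_-}\}$, which bounds the Luxemburg norm on $\partial\Omega$ by a multiple of $M$. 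Since the composition of a compact operator with continuous operators on either side is again compact, the trace map $W^{1,p(\cdot)}(\Omega)\hookrightarrow L^{q(\cdot)}(\partial\Omega)$ is compact, and \emph{a fortiori} continuous.

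Every step above is classical, so there is no genuinely hard analytic point; the one place demanding care is the bookkeeping in (ii), namely making sure that the \emph{compactness} (not merely continuity) of $W^{1,p_-}(\Omega)\hookrightarrow C(\overline{\Omega})$ survives composition with the boundary restriction and the final Lebesgue embedding. Routing everything through $C(\overline{\Omega})$, as dictated by part (i), is what makes this transparent and lets me avoid appealing to variable exponent trace theorems directly.
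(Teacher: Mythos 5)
Your argument is correct, and it takes a genuinely different route from the paper, which offers no proof at all: it simply refers to \cite{xF01,oK91,lD11} for (i) and to \cite[Proposition 2.6]{jY08} for (ii). For (i) you reprove the standard inclusion $L^{p(\cdot)}(\Omega)\hookrightarrow L^{q(\cdot)}(\Omega)$ for $q\le p$ on a set of finite measure via the pointwise bound $t^{q(x)}\le 1+t^{p(x)}$ and then apply Morrey's inequality; this is exactly the content of the cited results, so here the difference is only one of self-containedness. The real divergence is in (ii): the reference \cite{jY08} obtains the compact embedding $W^{1,p(\cdot)}(\Omega)\hookrightarrow L^{q(\cdot)}(\partial\Omega)$ from variable exponent trace theory, valid for $q$ below a critical trace exponent and without any assumption like $p_{-}>N$, whereas you exploit the standing hypothesis $p_{-}>N$ to route everything through the compact embedding $W^{1,p_{-}}(\Omega)\hookrightarrow C(\overline{\Omega})$ (Morrey plus Arzel\`a--Ascoli) and then compose with the continuous restriction to $\partial\Omega$ and the elementary embedding $C(\partial\Omega)\hookrightarrow L^{q(\cdot)}(\partial\Omega)$ on a boundary of finite surface measure. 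This buys a short, elementary proof that avoids trace theorems entirely and is fully adequate for the way the proposition is used in the paper (it is only ever applied to the exponents $p_k$, for which $(p_k)_{-}\geq p_{-}>N$); the price is that your version is strictly less general than the cited one, but that generality is never needed here. One cosmetic remark: since elements of $W^{1,p(\cdot)}(\Omega)$ are a priori equivalence classes, the boundary restriction in your chain should be understood as acting on the continuous representative furnished by (i), which coincides with the usual Sobolev trace; this is implicit in your write-up and worth making explicit in one sentence.
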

For reference, see  \cite{xF01, oK91, lD11} for (i) and \cite[Proposition 2.6]{jY08} for (ii).
\begin{rem}\label{rem:embeddings}
In our case, we have that $(p_k)_{-}>N$ and $p_k|_{\partial\Omega}=p \in C(\partial\Omega)$. Thus from (ii) of Proposition ~\ref{prop:embeddings}, we have that
\[
W^{1,p_{k}(\cdot)}(\Omega)\hookrightarrow L^{p_{k}(\cdot)}(\partial\Omega)
\]
\end{rem}
\begin{prop} \label{prop:norminteg-proper}
Let $u\in L^{p(\cdot)}(\Omega)$, then we have
\begin{enumerate}
\item[(i)]
If $ \|u\|_{p(\cdot)}>1$, then
\[
\|u\|_{p(\cdot)}^{p_{-}}\leq\int_{\Omega}|u(x)|^{p(x)}dx\leq\|u\|_{p(\cdot)}^{p^{+}}.
\]
\item[(ii)]
If $ \|u\|_{p(\cdot)}<1$, then
\[
\|u\|_{p(\cdot)}^{p^{+}}\leq\int_{\Omega}|u(x)|^{p(x)}dx\leq\|u\|_{p(\cdot)}^{p_{-}}.
\]
\item[(iii)]
$\|u\|_{p(\cdot)}=1 \Leftrightarrow \int_{\Omega}|u(x)|^{p(x)}dx=1$.
\end{enumerate}
\end{prop}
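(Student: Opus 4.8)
The plan is to reduce all three statements to a single fact about the modular $\rho(u):=\int_\Omega |u(x)|^{p(x)}\,dx$, namely that for $u\neq 0$ one has $\rho\big(u/\|u\|_{p(\cdot)}\big)=1$, and then to exploit the elementary scaling behaviour of $\rho$ under the pointwise bounds $p_-\le p(x)\le p^+$. First I would record the two scaling inequalities. For $\lambda\ge 1$, since $\lambda^{p_-}\le \lambda^{p(x)}\le \lambda^{p^+}$ a.e., integrating $\lambda^{p(x)}|u|^{p(x)}$ gives
\[
\lambda^{p_-}\rho(u)\le \rho(\lambda u)\le \lambda^{p^+}\rho(u),
\]
while for $0<\lambda\le 1$ the inequalities reverse to $\lambda^{p^+}\rho(u)\le \rho(\lambda u)\le \lambda^{p_-}\rho(u)$. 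Both follow purely from the monotonicity of $t\mapsto \lambda^{t}$.

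Next I would establish the key identity. Fix $u\neq 0$ and set $f(\lambda):=\rho(u/\lambda)$ for $\lambda>0$. Using $p^+<\infty$, dominated and monotone convergence show that $f$ is continuous and strictly decreasing, with $f(\lambda)\to 0$ as $\lambda\to\infty$ and $f(\lambda)\to\infty$ as $\lambda\to 0^+$. Hence there is a unique $\lambda^\ast$ with $f(\lambda^\ast)=1$, and by the very definition of the Luxemburg norm as the infimum of those $\lambda$ with $f(\lambda)\le 1$, this $\lambda^\ast$ equals $\|u\|_{p(\cdot)}$. Thus $\rho\big(u/\|u\|_{p(\cdot)}\big)=1$.

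The three parts then fall out by writing $u=\|u\|_{p(\cdot)}\,v$ with $v:=u/\|u\|_{p(\cdot)}$, which satisfies $\rho(v)=1$, and applying the scaling inequalities with $\lambda=\|u\|_{p(\cdot)}$. For part (i) the hypothesis $\|u\|_{p(\cdot)}>1$ puts us in the first regime and gives $\|u\|_{p(\cdot)}^{p_-}\le \rho(u)\le \|u\|_{p(\cdot)}^{p^+}$; for part (ii) the hypothesis $\|u\|_{p(\cdot)}<1$ puts us in the second regime and yields the reversed bounds $\|u\|_{p(\cdot)}^{p^+}\le \rho(u)\le \|u\|_{p(\cdot)}^{p_-}$. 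Part (iii) is the boundary case: if $\|u\|_{p(\cdot)}=1$ then $\rho(u)=\rho\big(u/\|u\|_{p(\cdot)}\big)=1$ directly, and conversely $\rho(u)=1$ means $f(1)=1$, so $1$ is the unique root $\lambda^\ast$ and hence $\|u\|_{p(\cdot)}=1$.

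The only delicate point is the continuity and limiting behaviour of $f$ underlying the identity $\rho\big(u/\|u\|_{p(\cdot)}\big)=1$; this is precisely where the finiteness $p^+<\infty$ enters, guaranteeing an integrable dominating function for $f(\lambda)\to 0$ and ruling out the degenerate behaviour that would occur on a set where $p=\infty$. Everything else is a direct substitution, so I expect no serious obstacle; alternatively, the entire statement may be quoted from the monograph \cite{lD11}.
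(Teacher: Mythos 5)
Your proof is correct. The paper itself gives no argument for this proposition: it is quoted as standard material on variable exponent Lebesgue spaces, with the reader referred to \cite{oK91}, \cite{xF01} and \cite{lD11}. Your argument is essentially the canonical one from those references: the scaling bounds $\lambda^{p_-}\rho(u)\le\rho(\lambda u)\le\lambda^{p^+}\rho(u)$ for $\lambda\ge 1$ (reversed for $\lambda\le 1$), combined with the unit-ball identity $\rho\bigl(u/\|u\|_{p(\cdot)}\bigr)=1$, which you correctly justify via continuity and strict monotonicity of $\lambda\mapsto\rho(u/\lambda)$ under the hypothesis $p^+<\infty$ (the hypothesis under which the proposition is applied in the paper, namely to the truncated exponents $p_k$). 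You are also right to flag that the attainment of the Luxemburg infimum is the only delicate point; everything else is substitution.
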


\hspace{5mm}
Next, we give the definition of a viscosity solution for problem (1.k).  For the general case see \cite{mC92} and \cite{jM09}. 

For a point  $x\in \partial D$ we define the set of outward unit normals $N(x)$, as the collection of all vectors $\nu$ for which we can find a sequence $(x_k)$ in 
$\partial D$, such that $x_k\to x$ and for each $k$ there exists a unique outward unit normal vector $\nu_k$ on $\partial D$ at $x_k$, such that $\nu_k\to\nu$. Note that since $D$ has Lipschitz boundary $N(x)$ is nonempty.

\begin{dfn}\label{dfn:viscosity}

\begin{enumerate}
\item[(i)]
Let $u$ be a lower semicontinuous function in $\overline{\Omega}$. We say that $u$ is a viscosity supersolution of the problem (1.k), if for every $\f\in C^2(\omk)$ such that $ u-\f $ attains its strict minimum at $x_0 \in\omk$ with $u(x_0)=\f(x_0)$, we have
\\
 \begin{itemize}
\item
if $x_0 \in \Omega\backslash\overline{ D} $, then $-\Delta_{p(x_0)}\f(x_0)\geq 0$.
\\
\item
 If $x_0 \in D $, then $-\Delta_{k}\f(x_0)\geq 0$.
\\
 \item
 If $x_0 \in \partial D  $, then
\begin{align*}
\max\{& -\Delta_{p(x_0)}\f(x_0), -\Delta_{k}\f(x_0),\\
&\sup_{\nu\in N(x_0)}\{(|\nabla \f(x_0)|^{k-2}-|\nabla \f(x_0)|^{p(x_0)-2})\nabla\f(x_0)\cdot\nu\}\}\geq 0\,.
\end{align*}
 \item
If $x_0 \in\partial\Omega $, then
\begin{align*}
\max\{|\nabla \f(x_0)|^{p(x_0)-2}\frac{\partial\f}{\partial\nu}(x_0)-g(x_0), -\Delta_{p(x_0)}\f(x_0)\}\geq 0.
\end{align*}
\end{itemize}

\item[(ii)]
Let $u$ be an upper semicontinuous function in $\overline{\Omega}$. We say that $u$ is a viscosity subsolution of the problem (1.k),  if for every $\f\in C^2(\omk)$ such that $ u-\f $ attains its strict maximum at $x_0 \in\omk$ with $u(x_0)=\f(x_0)$, we have
\\
 \begin{itemize}
\item
if $x_0 \in \Omega\backslash\overline{ D} $, then $-\Delta_{p(x_0)}\f(x_0)\leq 0$.
\\
\item
 If $x_0 \in D $, then $-\Delta_{k}\f(x_0)\leq 0$.
\\
 \item
 If $x_0 \in \partial D  $, then
\begin{align*}
\min\{& -\Delta_{p(x_0)}\f(x_0), -\Delta_{k}\f(x_0),\\
&\inf_{\nu\in N(x_0)}\{(|\nabla \f(x_0)|^{k-2}-|\nabla \f(x_0)|^{p(x_0)-2})\nabla\f(x_0)\cdot\nu\}\}\leq 0.
\end{align*}
 \item
If $x_0 \in\partial\Omega $, then
\begin{align*}
\min\{|\nabla \f(x_0)|^{p(x_0)-2}\frac{\partial\f}{\partial\nu}(x_0)-g(x_0), -\Delta_{p(x_0)}\f(x_0)\}\leq 0.
\end{align*}
\end{itemize}

\item[(iii)]
Finally, $u$ is called a viscosity solution of the problem (1.k), if it is both a viscosity subsolution and a viscosity supersolution.
\end{enumerate}
\end{dfn}

\section{The Variational and Viscosity solutions of the Intermediate Problems}
\begin{dfn}
Let $u \in W^{1,p_{k}(\cdot)}(\Omega)$. We say that $u$ is a weak solution of problem (1.k) if
\begin{equation} \label{weakformulation}
\int_{\Omega}|\nabla u|^{p_{k}(x)-2}\nabla u\cdot\nabla v dx = \int_{\partial \Omega} gv dS, \quad \text{for all}\,\, v \in W^{1,p_{k}(\cdot)}(\Omega)\,.
\end{equation}

\end{dfn}
\begin{lemma} \label{lemma:existence}
There exists a unique weak solution $u_{k}$ to problem (1.k), which is the unique minimizer of the functional
\[
I_{k}(u)=\int_{\Omega}\frac{|\nabla u|^{p_{k}(x)}}{p_{k}(x)}\,dx - \int_{\partial \Omega}gu\,dS
\]
in the set
\[
S_{k}=\bigg\{u \in \wk : \int_{\Omega}u=0\bigg\}.
\]
\end{lemma}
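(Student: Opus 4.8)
The plan is to obtain $u_k$ by the direct method of the calculus of variations applied to the functional $I_k$ on the constraint set $S_k$, and then to identify its Euler--Lagrange equation with the weak formulation (\ref{weakformulation}).

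First I would record that $I_k$ is well defined and finite on $S_k$: the gradient term is controlled by the modular $\int_{\Omega}|\nabla u|^{p_k(x)}\,dx$, which is finite for $u\in\wk$, while the boundary term is finite because $g\in C(\omk)$ and, by Proposition~\ref{prop:embeddings}(ii), the trace of $u$ lies in $L^{p_k(\cdot)}(\partial\Omega)$, in particular in $L^1(\partial\Omega)$. The crucial step is coercivity. Since $p_k(x)\le k$, one has $I_k(u)\ge \frac{1}{k}\int_{\Omega}|\nabla u|^{p_k(x)}\,dx-\int_{\partial\Omega}|g|\,|u|\,dS$. For the boundary term I would use the continuous trace embedding of Proposition~\ref{prop:embeddings}(ii) followed by the Poincar\'e inequality (\ref{poincare}), valid on $S_k$ by Remark~\ref{rem:poincare}, to get $\int_{\partial\Omega}|g|\,|u|\,dS\le C\|u\|_{1,p_k(\cdot)}\le C'\|\nabla u\|_{p_k(\cdot)}$. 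For the gradient term, Proposition~\ref{prop:norminteg-proper}(i) gives $\int_{\Omega}|\nabla u|^{p_k(x)}\,dx\ge \|\nabla u\|_{p_k(\cdot)}^{p_{-}}$ whenever $\|\nabla u\|_{p_k(\cdot)}>1$. Because $p_{-}>N\ge 2$, the resulting lower bound $\frac{1}{k}\|\nabla u\|_{p_k(\cdot)}^{p_{-}}-C'\|\nabla u\|_{p_k(\cdot)}$ tends to $+\infty$ as $\|\nabla u\|_{p_k(\cdot)}\to\infty$, so $I_k$ is coercive on $S_k$, again using the norm equivalence of Remark~\ref{rem:poincare}.

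Next I would run the direct method. The space $\wk$ is reflexive and $S_k$ is a closed convex (indeed affine) subset, hence weakly closed; moreover the linear constraint $u\mapsto\int_{\Omega}u$ passes to the limit under weak convergence. Taking a minimizing sequence, coercivity gives boundedness in $\wk$ and reflexivity a weakly convergent subsequence with limit $u_k\in S_k$. Weak lower semicontinuity of $I_k$ follows by splitting: the convex gradient functional $u\mapsto\int_{\Omega}\frac{|\nabla u|^{p_k(x)}}{p_k(x)}\,dx$ is strongly continuous and convex, hence weakly lower semicontinuous, while the boundary term is weakly continuous because the trace embedding of Proposition~\ref{prop:embeddings}(ii) is compact, so traces converge strongly in $L^{p_k(\cdot)}(\partial\Omega)$. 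Thus $u_k$ minimizes $I_k$ over $S_k$. Uniqueness comes from strict convexity: the integrand $\xi\mapsto|\xi|^{p_k(x)}/p_k(x)$ is strictly convex since $p_k(x)>1$, and the boundary term is linear, so if two minimizers existed their average would have strictly smaller energy unless their gradients agreed a.e.; equal gradients force the difference to be constant, and the constraint $\int_{\Omega}u=0$ kills that constant.

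Finally I would derive the Euler--Lagrange equation. For $w\in\wk$, set $v=w-\frac{1}{|\Omega|}\int_{\Omega}w$, so that $\int_{\Omega}v=0$ and $u_k+tv\in S_k$; differentiating $t\mapsto I_k(u_k+tv)$ at $t=0$ (the modular is G\^ateaux differentiable because $p_k$ is bounded, which I would justify by dominated convergence) and using that $t=0$ is a minimum gives $\int_{\Omega}|\nabla u_k|^{p_k(x)-2}\nabla u_k\cdot\nabla v\,dx=\int_{\partial\Omega}gv\,dS$. Since $\nabla v=\nabla w$ and, by the compatibility condition $\int_{\partial\Omega}g=0$, one has $\int_{\partial\Omega}gv\,dS=\int_{\partial\Omega}gw\,dS$, this is precisely (\ref{weakformulation}) for every test function $w\in\wk$; conversely, convexity of $I_k$ makes any such critical point a minimizer, so the weak solution normalized by $\int_{\Omega}u=0$ coincides with $u_k$. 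The main obstacle is the coercivity step: one must control the non-homogeneous boundary term against the gradient modular, and this is exactly where the norm/modular comparison of Proposition~\ref{prop:norminteg-proper}, the trace embedding, and the Poincar\'e inequality must be combined, the condition $p_{-}>N$ guaranteeing that the superlinear gradient term dominates the linear boundary term.
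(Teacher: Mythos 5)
Your proposal is correct and follows essentially the same route as the paper: the direct method on $S_k$ (coercivity plus weak lower semicontinuity via convexity of the gradient term and compactness of the trace embedding), strict convexity for uniqueness, and the mean-zero shift $\tilde v=v-\frac{1}{|\Omega|}\int_{\Omega}v$ combined with $\int_{\partial\Omega}g=0$ to pass between minimizers of $I_k$ on $S_k$ and weak solutions. The only divergence is in the coercivity estimate, where the paper absorbs the boundary term by an $\varepsilon$-Young inequality into a small multiple of $\|u\|_{1,p_k(\cdot)}^{p_{-}}$, while you bound it linearly by $C\|\nabla u\|_{L^{p_k(\cdot)}}$ and use that the gradient term grows like the power $p_{-}>1$; both arguments are valid.
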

\begin{proof}
First we show that $I_k$ is coercive and weakly lower semicontinuous, so attains its minimum in $S_k$. Let $\nk{u}\r\infty$. To obtain coercivity we need to show that $I_{k}(u)\r\infty$. Due to the fact that the norms $\nk{u}, \|\nabla u \|_{p_k(\cdot)}$ are equivalent in $S_k$,
we may suppose that $\|\nabla u \|_{p_k(\cdot)}>1$. From the  $\e$-Young inequality, the embeddings $W^{1,p_{-}}(\Omega)\hookrightarrow L^{p_{-}}(\partial \Omega)$, $W^{1,p_{k}(\cdot)}(\Omega)\hookrightarrow W^{1,p_{-}}(\Omega) $  and (i) of  Proposition~\ref{prop:norminteg-proper} we have that,

\begin{align*}
I_{k}(u) & = \int_{\Omega}\frac{|\nabla u|^{p_{k}(x)}}{p_{k}(x)}\,dx - \int_{\partial\Omega}gu\,dS \\
        & \geq\frac{1}{p_{+}}\int_{\Omega}|\nabla u|^{p_{k}(x)}\,dx -\frac{1}{\e^{p_{-}'}}\nrm{g}_{L^{p_{-}'}(\partial\Omega)}^{p_{-}'}-\e^{p_{-}}\nrm{u}^{p^-}_{L^{p_{-}}(\partial\Omega)} \\
       & \geq \frac{1}{p_+}\nk{\nabla u}^{p_{-}}-\frac{1}{\e^{p_{-}'}}\nrm{g}_{L^{p_{-}}}^{p_{-}'}-                                                  C\e^{p_{-}}\nrm{u}_{1,{p_{-}}}^{p_{-}} \\
                    & \geq \tilde{C}\nk{u}^{p_{-}} -\frac{1}{\e^{p_{-}'}}\nrm{g}_{L^{p_{-}'}}^{p_{-}'}-C\e^{p_{-}}\nk{u}^{p_{-}} \\
         & \geq \nk{u}^{p_{-}}(\tilde{C}-C\e^{p_{-}})-\frac{1}{\e^{p_{-}'}}\nrm{g}_{L^{p_{-}'}}^{p_{-}'},
\end{align*}

where $p'_{-}$ is the conjugate exponent of $p_{-}$.

If we choose $\e>0$ small enough such that $\tilde{C}-C \e^{p_{-}}>0$,  we have that $I_{k}(u)\r\infty$ as $\nk{u}\r\infty$. Thus, $I_k$ is coercive.

For the weak lower semicontinuity let $u_n\xrightarrow[]{w} u$ in $S_k$. Using the weak lower semicontinuity of the integral $\int_{\Omega}\frac{|\nabla u|^{p_{k}(x)}}{p_{k}(x)}$ and the embedding $S_k\hookrightarrow L^{p_{-}}(\partial\Omega)$ we obtain that $ I_k$ is weak lower semicontinuous. Hence, $I_k$ attains its minimum in $S_k$. The uniqueness is standard due to the strict convexity of $I_k$. It remains to show that the unique minimizer $u$ is also a unique weak solution of problem (1.k). Let $v\in W^{1,p_k(\cdot)}(\Omega)$ and set $\tilde{v}=v-\frac{1}{|\Omega|}\int_{\Omega}vdx$. Then $\tilde{v}\in S_k$ and using the fact that $u$ minimizes $I_k$ in $S_k$, it is easy to see that $u$ satisfies (\ref{weakformulation}) and hence is a weak solution of problem (1.k). Due to the fact that a weak solution of problem (1.k) is a minimizer of $I_k$ in $S_k$, the proof is completed.

\end{proof}
The next Lemma is very useful since it provides us with a problem that has the same weak solutions as problem (1.k) but which allows us to take separate cases.

\begin{lemma}\label{lemma:equivproblem}
The following problem
\begin{equation}\label{equivproblem}
\begin{cases} -\Delta_{p_k(x)}u(x)=0, & x\in \Omega\backslash\overline{ D}
\\ -\Delta_{k}u(x)=0, & x\in D
 \\
 |\nabla u(x)|^{k-2}\frac{\partial u}{\partial \nu}(x)=|\nabla u(x)|^{p(x)-2}\frac{\partial u}{\partial \nu}(x), & x \in \partial D \\
 |\nabla u(x)|^{p(x)-2}\frac{\partial u}{\partial\nu}(x)=g(x),& x \in \partial\Omega.
 \end{cases}
\end{equation}
has the same weak solutions as problem (1.k).
 \end{lemma}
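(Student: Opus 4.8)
The plan is to exploit the fact that, for $k>p^{+}$, the exponent $p_{k}$ equals $p$ on $\overline{\Omega}\setminus D$ and equals the constant $k$ on $D$, so that the single bulk integral appearing in the weak formulation (\ref{weakformulation}) splits as
\[
\int_{\Omega}|\nabla u|^{p_{k}(x)-2}\nabla u\cdot\nabla v\,dx=\int_{\Omega\setminus\overline{D}}|\nabla u|^{p(x)-2}\nabla u\cdot\nabla v\,dx+\int_{D}|\nabla u|^{k-2}\nabla u\cdot\nabla v\,dx .
\]
The whole statement then reduces to matching this decomposition, together with the single boundary term $\int_{\partial\Omega}gv\,dS$, against the four conditions in (\ref{equivproblem}). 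Throughout I would interpret the interior equations in the usual weak (distributional) sense and the flux conditions on $\partial D$ and $\partial\Omega$ in the associated trace sense, which is legitimate because $u\in W^{1,p_{k}(\cdot)}(\Omega)\hookrightarrow C(\overline{\Omega})$ and because, by Proposition~\ref{prop:density}, it suffices to test with $v\in C^{\infty}(\overline{\Omega})$.

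For the implication that a weak solution $u$ of (1.k) solves (\ref{equivproblem}), I would first restrict the test functions in (\ref{weakformulation}) to those compactly supported in the open set $\Omega\setminus\overline{D}$, respectively in $D$; since the corresponding boundary contributions vanish, the split identity above immediately yields $-\Delta_{p(x)}u=0$ in $\Omega\setminus\overline{D}$ and $-\Delta_{k}u=0$ in $D$, the first two lines of (\ref{equivproblem}). Next I would take a general $v\in C^{\infty}(\overline{\Omega})$ and apply Green's formula on each of the two subdomains separately. The two resulting boundary integrals over $\partial D$ carry opposite orientations (the outward normal of $\Omega\setminus\overline{D}$ along $\partial D$ is $-\nu$, with $\nu$ the outward normal of $D$), so after adding and invoking the interior equations one is left with
\[
\int_{\partial\Omega}|\nabla u|^{p(x)-2}\tfrac{\partial u}{\partial n}\,v\,dS+\int_{\partial D}\big(|\nabla u|^{k-2}-|\nabla u|^{p(x)-2}\big)\tfrac{\partial u}{\partial\nu}\,v\,dS=\int_{\partial\Omega}gv\,dS .
\]
Choosing $v$ with trace supported near $\partial\Omega$, and using that this trace is otherwise arbitrary, gives the Neumann condition $|\nabla u|^{p(x)-2}\tfrac{\partial u}{\partial n}=g$ on $\partial\Omega$; choosing $v$ with trace supported near $\partial D$ gives the transmission condition $|\nabla u|^{k-2}\tfrac{\partial u}{\partial\nu}=|\nabla u|^{p(x)-2}\tfrac{\partial u}{\partial\nu}$ on $\partial D$.

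For the converse I would simply reverse the computation: given $u$ satisfying (\ref{equivproblem}), multiply the equation on $\Omega\setminus\overline{D}$ and the equation on $D$ by an arbitrary $v\in W^{1,p_{k}(\cdot)}(\Omega)$ (approximated by smooth functions through Proposition~\ref{prop:density}), integrate by parts on each subdomain, and add. The interface integrals on $\partial D$ cancel exactly because of the flux-continuity condition there, while the $\partial\Omega$ term is replaced by $\int_{\partial\Omega}gv\,dS$ through the Neumann condition; recombining the two bulk integrals via $p_{k}$ reproduces (\ref{weakformulation}). I expect the only delicate point to be the rigorous justification of these Green identities in the variable-exponent setting: the normal fluxes $|\nabla u|^{p(x)-2}\tfrac{\partial u}{\partial\nu}$ are a priori defined only in a weak trace sense, so the interface and boundary conditions must be read accordingly, and the bookkeeping of the two opposite normals on $\partial D$---which is precisely what turns the transmission condition into flux continuity---has to be carried out with care.
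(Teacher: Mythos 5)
Your proposal is correct and follows essentially the same route as the paper: for $k>p^{+}$ the exponent $p_{k}$ splits the bulk integral over $\Omega\setminus\overline{D}$ and $D$, and density of $C^{\infty}(\overline{\Omega})$ (Proposition~\ref{prop:density}) together with integration by parts and the Gauss--Green theorem on the two subdomains shows that the weak formulation of (\ref{equivproblem}) coincides with (\ref{weakformulation}). The paper states this in one line; you have merely written out the bookkeeping of the two opposite normals on $\partial D$ and the localization of test functions, which is exactly the intended argument.
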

\begin{proof}
Let $k>p^+$, then $C^{\infty}(\overline{\Omega})$ is dense in $W^{1,p_k(\cdot)}(\Omega)$ (see Proposition ~\ref{prop:density}). If we take as a test function $v\in C^{\infty}(\overline{\Omega})$,  use integration by parts, Gauss-Green theorem and the fact that $D$ is compactly supported in $\Omega$, we conclude that the weak formulation of (\ref{equivproblem}) is (\ref{weakformulation}).
\end{proof}
The next Lemma is crucial in the proof of our main results. Note that the importance of the condition $\int_{\Omega}u=0$ is evident.
\begin{lemma}\label{lemma:equic-unifbndness}
Let $u_k$ be a weak solution of  problem (1.k). Then the sequence $(u_k)$ is equicontinuous and uniformly bounded.
\end{lemma}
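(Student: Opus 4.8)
The plan is to deduce both conclusions from a single uniform estimate: that $\nrm{\nabla u_k}_{L^{p_{-}}(\Omega)}$ is bounded independently of $k$ for all large $k$. Once this is in hand, the hypothesis $p_{-}>N$ does the rest: the Morrey-type embedding $W^{1,p_{-}}(\Omega)\hookrightarrow C(\omk)$ from Proposition~\ref{prop:embeddings}(i) is in fact a bound in $C^{0,\alpha}(\omk)$ with $\alpha=1-N/p_{-}$, so a uniform $W^{1,p_{-}}$-bound becomes a uniform Hölder bound. A uniform Hölder bound is precisely equicontinuity, and, together with the constraint $\int_{\Omega}u_k=0$ and the Poincar\'{e} inequality (\ref{poincare}), it also gives uniform boundedness. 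Hence everything is reduced to the gradient estimate, and this is where the condition $\int_{\Omega}u_k=0$ is essential.

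To prove the gradient estimate I would start from the variational characterization of Lemma~\ref{lemma:existence}. Since the zero function lies in $S_k$ and $u_k$ minimizes $I_k$, we have $I_k(u_k)\le I_k(0)=0$, that is
\[
\int_{\Omega}\frac{|\nabla u_k|^{p_{k}(x)}}{p_{k}(x)}\,dx \le \int_{\partial\Omega}g\,u_k\,dS .
\]
The boundary term is controlled by the very quantity we want to bound: H\"{o}lder's inequality on $\partial\Omega$, the trace embedding $W^{1,p_{-}}(\Omega)\hookrightarrow L^{p_{-}}(\partial\Omega)$ (as used in the proof of Lemma~\ref{lemma:existence}), and Poincar\'{e} give $\int_{\partial\Omega}g\,u_k\,dS\le C\,\nrm{\nabla u_k}_{L^{p_{-}}(\Omega)}$ with $C$ independent of $k$. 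I then split $\Omega=(\Omega\sm D)\cup D$, recalling that for $k>p^{+}$ one has $p_k=p$ on $\Omega\sm D$ and $p_k=k$ on $D$. Bounding $1/p_k\ge 1/p^{+}$ on $\Omega\sm D$ and keeping $1/k$ on $D$, and using that both resulting integrals are nonnegative, each is separately $\le C\,\nrm{\nabla u_k}_{L^{p_{-}}(\Omega)}$. On $\Omega\sm D$ this controls $\int_{\Omega\sm D}|\nabla u_k|^{p(x)}\,dx$, and splitting according to whether $|\nabla u_k|\le 1$ yields $\int_{\Omega\sm D}|\nabla u_k|^{p_{-}}\,dx\le |\Omega|+C'\,\nrm{\nabla u_k}_{L^{p_{-}}(\Omega)}$.

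The main obstacle is the subdomain $D$, where the exponent $k\to\infty$: there one only extracts $\int_{D}|\nabla u_k|^{k}\,dx\le C\,k\,\nrm{\nabla u_k}_{L^{p_{-}}(\Omega)}$, and the factor $k$ threatens uniformity. Writing $a_k:=\nrm{\nabla u_k}_{L^{p_{-}}(\Omega)}$ and passing back to the fixed exponent $p_{-}$ by H\"{o}lder on $D$ gives
\[
\int_{D}|\nabla u_k|^{p_{-}}\,dx \le |D|^{\,1-p_{-}/k}\,(C\,k\,a_k)^{\,p_{-}/k}.
\]
The trick is the elementary inequality $t^{\theta}\le \theta t+1$ for $\theta\in(0,1]$, applied with $\theta=p_{-}/k$ and $t=C\,k\,a_k$: it exactly cancels the $k$ and leaves a term linear in $a_k$ with $k$-independent constants. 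Adding the two regions then gives $a_k^{\,p_{-}}\le C_1+C_2\,a_k$ with $C_1,C_2$ independent of $k$, and since $p_{-}>N\ge 2>1$ this superlinear inequality forces $a_k\le C$ uniformly. The one point requiring care throughout is the uniformity in $k$ of all constants (trace, Poincar\'{e}, and the comparison of $L^{p_{k}(\cdot)}$ with $L^{p_{-}}$ via Proposition~\ref{prop:norminteg-proper}); this is precisely where the facts that $\partial D$ is independent of $k$ and that $p_k\ge p_{-}>N$ are used.
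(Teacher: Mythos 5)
Your proof is correct, and while it shares the paper's overall skeleton (a uniform gradient bound, then Morrey's inequality for equicontinuity, then the zero-mean constraint for uniform boundedness), the derivation of the key estimate is genuinely different. The paper tests the weak formulation (\ref{weakformulation}) with $v=u_k$ itself, which yields the \emph{unweighted} modular identity $\int_{\Omega}|\nabla u_k|^{p_k(x)}\,dx=\int_{\partial\Omega}gu_k\,dS$; it then controls the right-hand side by $C\|\nabla u_k\|_{L^{p_k(\cdot)}}$ via the variable-exponent H\"older inequality, the trace embedding of Remark~\ref{rem:embeddings} and (\ref{poincare}), and closes the loop with the norm--modular comparison of Proposition~\ref{prop:norminteg-proper}. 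Because the weight $1/p_k(x)$ never appears, no factor of $k$ arises on $D$. You instead start from the minimality $I_k(u_k)\le I_k(0)=0$, which carries the weight $1/p_k$ and hence produces the factor $k$ on $D$; your cancellation via H\"older on $D$ and the concavity inequality $t^{\theta}\le\theta t+1$ with $\theta=p_-/k$ is the right fix, and the closing step $a_k^{p_-}\le C_1+C_2a_k$ with $p_->1$ is sound. What your route buys is that it works entirely with the fixed exponent $p_-$ (standard H\"older, trace and Poincar\'e--Wirtinger), avoiding the variable-exponent machinery; what it costs is the extra $k$-cancellation argument that the paper's choice of test function renders unnecessary. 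It is worth noting that the paper's Remark following the lemma asserts that one cannot directly compare $\|\nabla u_k\|_{L^{p_-}}$ with $I_k(u_k)$ because of the boundary term; your superlinear absorption $a_k^{p_-}\le C_1+C_2a_k$ is precisely an alternative way around that obstruction.
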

\begin{proof}
If we multiply (1.k) by $u_k$ and use integration by parts, we obtain
\[
\olgradk = \olsin g u_{k}\,dS\leq 2 \|u_{k}\|_{L^{p_{k}(\cdot)}(\partial\Omega)} \|g\|_{L^{q_{k}(\cdot)}(\partial\Omega)} \leq C(\Omega,g) \|\nabla u_{k}\|_{L^{p_{k}(\cdot)}}
\]
where we used the variable exponent version of H\"{o}lder's inequality, (\ref{poincare}) and the embedding $\wk \hookrightarrow L^{p_{k}(\cdot)}(\partial \Omega)$ (see Remark ~\ref{rem:embeddings}). We consider the cases
\begin{itemize}
\item
if $\|\nabla u_{k}\|_{L^{p_{k}(\cdot)}} \leq 1$, then $\olgradk \leq 2C(\Omega,g)$
\vspace{2mm}
\item
if $\|\nabla u_{k}\|_{L^{p_{k}(\cdot)}} >1$, then from (i) of Proposition ~\ref{prop:norminteg-proper}, we have
\begin{align*}
\olgradk & \leq C(\Omega,g)\|\nabla u_{k}\|_{L^{p_{k}(\cdot)}}=C\big(\|\nabla u_{k}\|_{L^{p_{k}(\cdot)}}^{p_{-}}\big)^{\frac{1}{p_{-}}}
\\ & \leq C\bigg(\olgradk\bigg)^{\frac{1}{p_{-}}}.
\end{align*}
So, we end up with  the following inequality
\begin{equation}\label{p_k-integral-estimate}
\olgradk \leq C(\Omega,g,p_{-}),
\end{equation}
where C is independent of $k$.
\end{itemize}

On the other hand, since $p_{-}>N$ from Morrey's inequality (see \cite[p. 183]{lE10}) we have
\[
|u_{k}(x)-u_{k}(y)|\leq C(N,p_{-})|x-y|^{1-\frac{N}{p_{-}}}\bigg(\olgradpm\bigg)^{\frac{1}{p_{-}}}, \quad \text{for all}\,\, x, y \in \overline{\Omega}
\]
and
\[
\olgradpm=\smashoperator{\int_{\Omega\cap\{|\nabla u_k|\leq 1  \}}}|\nabla u_{k}|^{p_{-}}\,dx + \smashoperator{\int_{\Omega\cap\{|\nabla u_k| > 1  \}}}|\nabla u_{k}|^{p_{-}}\,dx
\]
\begin{equation}\label{p-integestimate}
\leq C(\Omega,p_{-}) + \olgradk \leq C(\Omega,p_{-},g),
\end{equation}
where in the last inequality we used the previous estimate for $\olgradk$. From the above, we obtain that
\begin{equation}\label{equicont}
|u_{k}(x)-u_{k}(y)|\leq C(\Omega,N,p_{-},g)|x-y|^{1-\frac{N}{p_{-}}}, \quad \text{for all}\,\, x, y \in \overline{\Omega}.
\end{equation}
Hence, the sequence $(u_k)$ is equicontinuous in $C(\overline{\Omega})$.

It remains to show that the sequence $(u_k)$ is uniformly bounded in $\overline{\Omega}$.
Let $k>p^{+}$. Since we are assuming that $\int_{\Omega}u_{k}=0$ and $u_{k} \in C(\overline{\Omega}) $, we may choose a
point $y \in \Omega$ such that $u_{k}(y)=0$. Then, from (\ref{equicont}) we get
\begin{align*}
|u_{k}(x)|& \leq C(\Omega,N,p_{-},g)|x-y|^{1-\frac{N}{p_{-}}}\\
 &\leq C(\Omega,N,p_{-},g)(\text{diam}(\Omega))^{1-\frac{N}{p_{-}}}\leq C(\Omega,N,p_{-},g)
\end{align*}
so $(u_k)$ is uniformly bounded in $\overline{\Omega}$ and this concludes the proof.
\end{proof}
\begin{rem}
Note  that in the above Lemma, in contrast to \cite[Proposition 2.5]{jM09} there can be no comparison of the $p_{-}$ norm of $|\nabla u_k|$ with $I_k(u_k)$, due to the existence of the term $\int_{\partial \Omega}gu_k dS$ in the definition of $I_k$. To overcome this difficulty we had to use the estimates given in Proposition ~\ref{prop:norminteg-proper}.

\end{rem}

\begin{prop}\label{prop:weaktoviscosity}
Let $u$ be a continuous weak solution of (1.k). Then $u$ is a solution of (1.k) in the viscosity sense.
\end{prop}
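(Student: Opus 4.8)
The plan is to prove the standard statement that a continuous weak solution of the divergence-form equation is automatically a viscosity solution, following the classical argument of Juutinen--Lindqvist--Manfredi adapted to the variable-exponent and Neumann setting described in Definition~\ref{dfn:viscosity}. The equivalence in Lemma~\ref{lemma:equivproblem} is essential here: it lets me treat the interior region $\Omega\setminus\overline{D}$, the interior region $D$, the interface $\partial D$, and the outer boundary $\partial\Omega$ as separate cases, matching the four cases in the viscosity definition. I would prove only the supersolution property, since the subsolution property follows by the symmetric argument (replace $u$ by $-u$, or reverse the inequalities and extrema).

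First I would fix a test function $\f\in C^2(\omk)$ touching $u$ from below at a strict minimum $x_0$, with $u(x_0)=\f(x_0)$, and argue by contradiction. Suppose the relevant viscosity inequality fails at $x_0$; by continuity of all the data it then fails on a small ball $B=B_r(x_0)$, for instance $-\Delta_{p(x)}\f(x)<0$ throughout $B$ when $x_0\in\Omega\setminus\overline{D}$, and similarly $-\Delta_k\f(x)<0$ when $x_0\in D$. Because the minimum of $u-\f$ at $x_0$ is strict, on the boundary sphere $\partial B$ we have $u-\f\geq m$ for some $m>0$; replacing $\f$ by $\f+m/2$ gives a function $\psi=\f+m/2$ that still satisfies the strict differential inequality on $B$ (the Laplacian terms are unaffected by adding a constant) but now lies \emph{above} $u$ on $\partial B$ while $\psi(x_0)>u(x_0)$. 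I would then use $\psi$ as a strict subsolution of the divergence-form operator on $B$ and compare it against the weak solution $u$: testing the weak formulation against $(u-\psi)^{+}$, which is supported in $B$, and using the strict monotonicity of the map $\xi\mapsto|\xi|^{p(x)-2}\xi$, yields that $u\le\psi$ in $B$, contradicting $\psi(x_0)>u(x_0)$. This is the usual comparison-with-a-strict-subsolution device; the variable exponent only enters through the pointwise ellipticity of $|\nabla\f|^{p(x)-2}\nabla\f$, which is fine since $p$ is $C^1$ on $\omk\setminus D$ and constant ($=k$) on $D$.

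The boundary and interface cases require more care. At $x_0\in\partial\Omega$ I would carry out the same contradiction scheme but now the comparison function must be tested against $(u-\psi)^{+}$ on a half-ball $B\cap\Omega$, and the boundary integral $\int_{\partial\Omega}g\,v\,dS$ from the weak formulation (\ref{weakformulation}) produces exactly the flux term $|\nabla\f|^{p(x_0)-2}\frac{\partial\f}{\partial\nu}-g$ appearing in Definition~\ref{dfn:viscosity}; the $\max$ structure is what allows the argument to close, since failure of the viscosity inequality means \emph{both} the interior term and the flux term are strictly negative. The interface case $x_0\in\partial D$ is the genuine obstacle: here the operator jumps from $\Delta_{p(x)}$ to $\Delta_k$ across $\partial D$, and the weak formulation encodes the transmission condition $|\nabla u|^{k-2}\partial_\nu u=|\nabla u|^{p(x)-2}\partial_\nu u$ as a natural (flux-continuity) condition. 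I expect the main difficulty to be showing that the three-term $\max$ in Definition~\ref{dfn:viscosity} is the correct object: one must track, via integration by parts on each side of $\partial D$ separately and the Gauss--Green computation already used in Lemma~\ref{lemma:equivproblem}, that the surface contribution from $\partial D$ is precisely $\sup_{\nu\in N(x_0)}\{(|\nabla\f(x_0)|^{k-2}-|\nabla\f(x_0)|^{p(x_0)-2})\nabla\f(x_0)\cdot\nu\}$, using that $N(x_0)$ is nonempty because $\partial D$ is Lipschitz. Once the three quantities in the $\max$ are identified with the interior and transmission contributions, the same contradiction-by-comparison argument applies on each piece and the proof concludes.
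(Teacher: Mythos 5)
Your overall architecture --- reduce to the four cases of Lemma~\ref{lemma:equivproblem}, argue by contradiction, perturb the test function by $m/2$, and compare against the weak formulation using the monotonicity of $\xi\mapsto|\xi|^{p(x)-2}\xi$ --- is exactly the paper's. But the central comparison step is written with the inequalities reversed, and as stated it does not close. Since $u-\f$ has a strict minimum $0$ at $x_0$, on the sphere $S(x_0,r)$ you have $u-\f\geq m>0$, so $\psi=\f+m/2$ satisfies $\psi\leq u-m/2$ on $\partial B$ and $\psi(x_0)>u(x_0)$ at the center: $\psi$ lies \emph{below} $u$ on $\partial B$, not above it as you claim. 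Consequently the correct test function is $(\psi-u)^{+}$, which vanishes in a neighbourhood of $\partial B$ and can be extended by zero to an admissible element of $W^{1,p_k(\cdot)}(\Omega)$; the function $(u-\psi)^{+}$ you propose is strictly positive on $\partial B$, so its extension by zero is discontinuous, is not a Sobolev test function, and the localized pairing with the strict inequality $-\Delta_{p(x)}\psi<0$ breaks down. Finally, the contradiction you state --- ``$u\leq\psi$ in $B$, contradicting $\psi(x_0)>u(x_0)$'' --- is no contradiction at all: the two assertions are consistent. The correct conclusion of the comparison is $\psi\leq u$ in $B$ (strict subsolution below weak supersolution, given $\psi\leq u$ on $\partial B$), which does contradict $\psi(x_0)>u(x_0)$; equivalently, as in the paper, one subtracts the two integral identities over $\{\psi>u\}$ and uses $(|a|^{p-2}a-|b|^{p-2}b)\cdot(a-b)\geq c\,|a-b|^{p}$ to force $0>0$.

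Once these signs are repaired the argument coincides with the paper's in all four cases: your description of the $\partial\Omega$ case (both members of the $\max$ strictly negative, the boundary term of (\ref{weakformulation}) supplying the flux expression) and of the interface case on $\partial D$ (integration by parts on each side separately, the surface term producing the transmission quantity over $N(x_0)$) is consistent with what the paper actually does. The fix is routine, but as written the key step would fail.
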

\begin{proof}
We prove that $u$ is a viscosity supersolution of (1.k). The proof that $u$ is a viscosity subsolution is similar.
Let $x_0 \in \omk$ and $\f \in C^2(\omk)$ such that $u-\f$ attains its strict minimum at $x_0$ and $(u-\f)(x_0)=0$.
We want so show that $-\Delta_{p_k(x_0)}\f(x_0)\geq 0$. To argue by contradiction suppose that $-\Delta_{p_k(x_0)}\f(x_0)<0$.
We consider the following cases.
\begin{itemize}
\item
Let $x_0 \in \Omega\backslash \overline{D} $. Then $-\Delta_{p(x_0)}\f(x_0)<0$ and by continuity there exists $r>0$ such that $B_r(x_0)\subset \Omega\backslash \overline{D}$ and for every $x\in B_r(x_0)$ we have
\begin{align*}
-\Delta_{p(x)}\f(x) =&-|\nabla \f(x)|^{p(x)-2}\Delta \f(x) \\
                   = &-(p(x)-2)|\nabla\f(x)|^{p(x)-4}\Delta_{\infty}\f(x)\\
                  & -|\nabla \f(x)|^{p(x)-2}\ln(|\nabla \f(x)|)\nabla \f(x)\cdot\nabla p(x)<0.
\end{align*}
Set \[
m=\inf_{x\in S(x_0,r)}(u-\f)(x) >0\,\, \text{and}\,\, \tilde{\f}=\f +\frac{m}{2}.\]
Then $\tilde{\f} $ satisfies $\tilde{\f}(x_0)>u(x_0)$ and  $\tilde{\f}(x)\leq u(x)$,  for every $x\in S(x_0,r)$.
Moreover,
\[
-\Delta_{p(x)}\tilde{\f}(x)<0, \quad \text{for all}\,\, x \in B_r(x_0).
\]
Multiplying by $(\tilde{\f}-u)^+$  and integrating we get,
\begin{align*}
0>&\smashoperator{\int_{B_r(x_0)}}|\nabla \tilde{\f}(x)|^{p(x)-2}\nabla \tilde{\f}(x)\cdot\nabla(\tilde{\f}-u)^+dx \\
  = &\smashoperator{\int_{B_r(x_0)\cap \{\tilde{\f}>u\}}}|\nabla \tilde{\f}(x)|^{p(x)-2}\nabla \tilde{\f}(x)\cdot \nabla(\tilde{\f}-u)dx.
\end{align*}
 If we extend $(\tilde{\f}-u)^+$ as zero outside of $B_r(x_0)$ and use it as a test function in the weak formulation
of $-\Delta_{p(x)}u(x)=0$, we obtain
\begin{align*}
0=&\smashoperator{\int_{\Omega\backslash\overline{ D}}}|\nabla u(x)|^{p(x)-2}\nabla u(x)\cdot\nabla(\tilde{\f}-u)^+dx \\
  = &\smashoperator{\int_{B_r(x_0)\cap \{\tilde{\f}>u\}}}|\nabla u(x)|^{p(x)-2}\nabla u(x)\cdot \nabla(\tilde{\f}-u)\,dx.
\end{align*}
By subtracting and using a well known inequality (see \cite[p.51]{jK011}) we conclude
\begin{align*}
0>&\smashoperator{\int_{B_r(x_0)\cap \{\tilde{\f}>u\}}}(|\nabla \tilde{\f}(x)|^{p(x)-2}\nabla \tilde{\f}(x)-|\nabla u(x)|^{p(x)-2}\nabla \tilde{u}(x))\cdot\nabla(\tilde{\f}-u)\,dx\\
  \geq & c\smashoperator{\int_{B_r(x_0)\cap \{\tilde{\f}>u\}}}|\nabla \tilde{\f}-\nabla u|^{p(x)}dx,
\end{align*}

which is a contradiction.
\item
If $x_0\in D$ then the proof is exactly the same, so $-\Delta_{k}\f(x_0)\geq 0$.
\vspace{2mm}
\item
Let $x_0\in \partial D$. Since $\partial D$ is not smooth the normal vector field $\nu(\cdot)$ is not uniquely defined. In particular to each $x\in\partial D$ there corresponds a set of outward unit normals $N(x)$. Following \cite{Barles} we need to show that
\begin{align*}
\max\{& -\Delta_{p(x_0)}\f(x_0), -\Delta_{k}\f(x_0),\\
&\sup_{\nu\in N(x_0)}\{(|\nabla \f(x_0)|^{k-2}-|\nabla \f(x_0)|^{p(x_0)-2})\nabla\f(x_0)\cdot\nu\}\}\geq 0\,.
\end{align*}
We argue again by contradiction. So, by continuity there exists $r>0$ such that
\begin{align*}
&-\Delta_{p(x)}\f(x)<0, \quad \text{for all}\,\, x \in B_r(x_0)\cap\Omega\backslash\overline{ D}\\
&-\Delta_{k}\f(x)<0, \quad \text{for all}\,\, x \in B_r(x_0)\cap D\,.
\end{align*}

and
\[
(|\nabla \f(x)|^{k-2}-|\nabla \f(x)|^{p(x)-2})\nabla\f(x)\cdot\nu<0\,,
\]
for all $x \in
B_r(x_0)\cap \partial D$ and all $\nu \in
N(x)$.
Set
\[
m=\inf_{x\in S(x_0,r)}(u-\f)(x) >0\,\, \text{and}\,\, \tilde{\f}=\f + \frac{m}{2}.
\]
Then $\tilde{\f} $ satisfies $\tilde{\f}(x_0)>u(x_0)$ and  $\tilde{\f}(x)\leq u(x)$,  for every $x\in S(x_0,r)$. Multiplying the first two inequalities
by $(\tilde{\f}-u)^+$, integrating by parts, adding and using the third one, we have
\begin{align*}
\smashoperator{\int_{B_r(x_0)\cap(\Omega\backslash\overline D) }}|\nabla & \tilde{\f}|^{p(x)-2}\nabla \tilde{\f}\cdot\nabla(\tilde{\f}-u)^+ dx
+ \smashoperator{\int_{B_r(x_0)\cap D}}|\nabla \tilde{\f}|^{k-2}\nabla \tilde{\f}\cdot\nabla(\tilde{\f}-u)^+ dx  \\
& <\smashoperator{\int_{B_r(x_0)\cap\partial D}}(|\nabla \tilde{\f}|^{k-2}-|\nabla \tilde{\f}|^{p(x)-2})\frac{\partial\f}{\partial\nu}(\tilde{\f}-u)^+ dS<0.
\end{align*}
On the other hand, we may extend $(\tilde{\f}-u)^+$ as zero outside $B_r(x_0)$, take it as a test function in the weak formulation of (1.k) and reach a contradiction as we did in the previous case.
\item
Let $x_0 \in \partial\Omega$. We need to show that
\[
\max\{|\nabla \f(x_0)|^{p(x_0)-2}\frac{\partial\f}{\partial\nu}(x_0)-g(x_0), -\Delta_{p(x_0)}\f(x_0)\}\geq 0.
\]
To contradiction, suppose that
\[
|\nabla \f(x_0)|^{p(x_0)-2}\frac{\partial\f}{\partial\nu}(x_0)-g(x_0)<0
\]
and
\[
-\Delta_{p(x_0)}\f(x_0)<0.
\]
Proceeding as before, we get
\[
\smashoperator{\int_{B_r(x_0)\cap\{\tilde{\f}>u\}}}|\nabla \tilde{\f}|^{p(x)-2}\nabla\tilde{\f}\cdot\nabla (\tilde{\f}-u)\,dx <\smashoperator{\int_{\partial\Omega\cap B_r(x_0)\cap\{\tilde{\f}>u\}}}(\tilde{\f}-u)g\,dS
\]
and
\[
\smashoperator{\int_{B_r(x_0)\cap\{\tilde{\f}>u\}}}|\nabla u|^{p(x)-2}\nabla u \cdot\nabla (\tilde{\f}-u)\,dx =\smashoperator{\int_{\partial\Omega\cap B_r(x_0)\cap \{\tilde{\f}>u\}}}(\tilde{\f}-u)g\,dS
\]
which is a contradiction. Thus, $u$ is a viscosity supersolution of (1.k).
 \end{itemize}
\end{proof}
\begin{rem}
In  \cite{jM09} the case $x_0 \in \partial \Omega$ is trivially verified because of the Dirichlet boundary condition. In our case (Proposition ~\ref{prop:weaktoviscosity} and Theorem ~\ref{thm:limitcase}) the boundary condition in the viscosity sense is
not immediately satisfied and one has to use the continuity of the boundary data $g$.
\end{rem}

\section{Passing to the limit }

Consider the following set
\[
S=\bigg\{u \in W^{1, p_{-}}(\Omega):u|_{\Omega\setminus\overline{D}}\in W^{1,p(\cdot)}(\Omega\backslash\overline{D}),\, \|\nabla u\|_{L^{\infty}(D)}\leq 1\, \text{and}\, \int_{\Omega}u=0  \bigg\}.
\]

If $v \in S \subset S_{k}$, we have
\begin{align*}
I_{k}(v)&=\smashoperator{\int_{\Omega\setminus\overline{D}}}\frac{|\nabla v|^{p(x)}}{p(x)}\,dx + \smashoperator{\int_{D}}\frac{|\nabla v|^{k}}{k}\,dx - \smashoperator{\int_{\partial \Omega}}gv\,dS\\
        & \leq \smashoperator{\int_{\Omega\setminus\overline{D}}}\frac{|\nabla v|^{p(x)}}{p(x)}\,dx +\frac{|D|}{k} - \smashoperator{\int_{\partial \Omega}}gv\,dS
\end{align*}

and passing to the limit as $k\r\infty$,  we have
\[
\liminf_{k}I_{k}(v) \leq \smashoperator{\int_{\Omega\setminus\overline{D}}}\frac{|\nabla v|^{p(x)}}{p(x)}\,dx - \smashoperator{\int_{\partial\Omega}}gv\,dS :=I_{\infty}(v)
\]

The next theorem is the first main result of this paper. We give a variational characterization of the limit function $u_\infty$ in $\Omega\setminus\overline{D}$, where $p^{+}:=\sup_{\overline{\Omega}\setminus \overline{D}} p(x)<\infty$ and next we prove that $u_\infty$ is infinity harmonic in $D$ in the viscosity sense.

\begin{thm}\label{thm:charact-of-u_inf}
Let $u_k$ be the unique minimizer of $I_k$ in $S_{k}$. Then there exists a function $u_{\infty} \in S$, such that $u_{\infty}$ minimizes  $I_{\infty}$ in $S$ and is also infinity harmonic in $D$.
\end{thm}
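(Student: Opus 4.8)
The plan is to realize $u_\infty$ as a uniform limit of a subsequence of the minimizers $u_k$, to check that it lies in $S$, and then to handle the variational characterization on $\Omega\sm\overline D$ and the infinity-harmonicity on $D$ separately. By Lemma~\ref{lemma:equic-unifbndness} the family $(u_k)$ is equicontinuous and uniformly bounded in $C(\omk)$, so Arzel\`{a}--Ascoli furnishes a subsequence (not relabelled) with $u_k\r u_\infty$ uniformly on $\omk$. The estimate $\olgradpm\le C$ with $C$ independent of $k$ from (\ref{p-integestimate}) bounds $(u_k)$ in $W^{1,p_-}(\Omega)$, so after a further extraction $\nabla u_k\rightharpoonup\nabla u_\infty$ weakly in $L^{p_-}(\Omega)$, the weak limit being the distributional gradient of $u_\infty$ because uniform convergence already forces convergence in the sense of distributions. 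Restricting to $\Omega\sm\overline D$, where $p_k=p$, the bound on $\olgradk$ together with lower semicontinuity of the modular places $u_\infty|_{\Omega\sm\overline D}$ in $W^{1,p(\cdot)}(\Omega\sm\overline D)$, while $\int_\Omega u_\infty=0$ passes to the limit by uniform convergence.

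The decisive membership condition is $\|\nabla u_\infty\|_{L^\infty(D)}\le 1$. On $D$ one has $p_k=k$, hence $\int_D|\nabla u_k|^k\,dx\le\olgradk\le C$. Fixing an exponent $m$ and taking $k>m$, H\"{o}lder's inequality yields $\big(\int_D|\nabla u_k|^m\,dx\big)^{1/m}\le|D|^{\frac1m-\frac1k}C^{1/k}$; letting $k\r\infty$ along the subsequence and invoking weak lower semicontinuity of the $L^m(D)$-norm gives $\big(\int_D|\nabla u_\infty|^m\,dx\big)^{1/m}\le|D|^{1/m}$. Sending $m\r\infty$ forces $\|\nabla u_\infty\|_{L^\infty(D)}\le 1$, and therefore $u_\infty\in S$.

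For the variational characterization I would combine two inequalities. Since $u_k$ minimizes $I_k$ over $S_k\supset S$, every $v\in S$ satisfies $I_k(u_k)\le I_k(v)$, and together with the bound $\liminf_k I_k(v)\le I_\infty(v)$ derived just before the statement this gives $\liminf_k I_k(u_k)\le I_\infty(v)$. For the opposite direction I would discard the nonnegative term $\int_D|\nabla u_k|^k/k\,dx$, use $\olsin g u_k\,dS\r\olsin g u_\infty\,dS$ (uniform convergence and $g\in C(\omk)$), and apply weak lower semicontinuity of $v\mapsto\int_{\Omega\sm\overline D}|\nabla v|^{p(x)}/p(x)\,dx$ to conclude $I_\infty(u_\infty)\le\liminf_k I_k(u_k)$. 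Chaining the two yields $I_\infty(u_\infty)\le I_\infty(v)$ for all $v\in S$, so $u_\infty$ minimizes $I_\infty$ over $S$.

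Infinity-harmonicity in $D$ follows from the stability of viscosity solutions. Each $u_k$ solves (1.k) in the viscosity sense (Proposition~\ref{prop:weaktoviscosity}), so in particular $-\Delta_k u_k=0$ in $D$. If $\f\in C^2$ touches $u_\infty$ strictly from below at $x_0\in D$, uniform convergence produces minima points $x_k\r x_0$ of $u_k-\f$, at which the supersolution property gives $-\Delta_k\f(x_k)\ge0$. Expanding $\Delta_k\f=|\nabla\f|^{k-2}\Delta\f+(k-2)|\nabla\f|^{k-4}\Delta_\infty\f$ and dividing by $(k-2)|\nabla\f(x_k)|^{k-4}$ when $\nabla\f(x_0)\neq0$, the $\Delta\f$ term disappears as $k\r\infty$ and leaves $-\Delta_\infty\f(x_0)\ge0$; the case $\nabla\f(x_0)=0$ is automatic since then $\Delta_\infty\f(x_0)=0$. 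The mirror argument for test functions touching from above gives the subsolution inequality, so $u_\infty$ is infinity harmonic in $D$. I expect the main obstacle to be the extraction of the $L^\infty$ gradient bound on $D$ through the $L^m$-norms and the limit $m\r\infty$, which is the precise mechanism by which the constraint defining $S$ on $D$ emerges in the limit, together with keeping the successive subsequence extractions consistent so that the uniform limit and the weak gradient limits refer to one and the same $u_\infty$; the viscosity passage to the limit is routine once the touching points $x_k$ have been located.
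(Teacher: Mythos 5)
Your proposal is correct and follows essentially the same route as the paper: Arzel\`{a}--Ascoli via Lemma~\ref{lemma:equic-unifbndness}, the H\"{o}lder/$L^m(D)$ trick with $m\r\infty$ to obtain $\|\nabla u_\infty\|_{L^\infty(D)}\leq 1$, the chain $I_{\infty}(u_{\infty})\leq\liminf_k I_{\infty}(u_k)\leq\liminf_k I_{k}(u_k)\leq\liminf_k I_{k}(v)\leq I_{\infty}(v)$ for the minimization, and the standard viscosity stability argument for infinity harmonicity in $D$ (which the paper delegates to references but which you spell out exactly as in the proof of Theorem~\ref{thm:limitcase}).
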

\begin{proof}

From Lemma ~\ref{lemma:equic-unifbndness} and the Arzel\`{a} - Ascoli theorem, there exists a subsequence of $(u_k)$ (denoted again $u_k$) and a function   $u_{\infty} \in C(\overline\Omega)$ such that \[u_k\r u_{\infty}\,,\quad \text{uniformly in}\quad \overline\Omega.\] First we show that $u_\infty\in S $. From the estimate (\ref{p-integestimate}) and the Poincar\'{e}-Wirtinger inequality in $W^{1,p_{-}}(\Omega)$ (recall that $\int_{\Omega}u_k=0$) we have that $(u_k)$ is bounded in $W^{1,p_{-}}(\Omega)$. Thus, \[u_k\xrightarrow[]{w} u_{\infty}\,,\quad \text{in}\quad W^{1,p_{-}}(\Omega).\] To obtain that  $u_\infty\in W^{1,p(\cdot)}(\Omega\setminus\overline D)$ we use the estimate (\ref{p_k-integral-estimate}) for the integral $\olgradk$  and the inequality (\ref{poincare}), to show that $(u_k)$ is bounded in  $W^{1,p(\cdot)}(\Omega\backslash\overline D)$. Thus, \[u_k \xrightarrow[]{w} u_{\infty}\,,\quad \text{in}\quad W^{1,p(\cdot)}(\Omega\backslash\overline D).\] Where we used again the pointwise convergence of $(u_k)$ to $u_{\infty}$ in $\overline\Omega$. Now let $m>p_{-}$ and $k>m$. Then from
H\"{o}lder's inequality and (\ref{p_k-integral-estimate}) we have
\begin{align*}
\bigg(\smashoperator{\int_{D}}|\nabla u_k|^{m}dx\bigg)^{\frac{1}{m}}&\leq |D|^{\frac{1}{m}-\frac{1}{k}}\bigg(\smashoperator{\int_{D}}|\nabla u_k|^{k}dx\bigg)^{\frac{1}{k}}\\
&\leq|D|^{\frac{1}{m}-\frac{1}{k}}\bigg(\smashoperator{\int_{\Omega}}|\nabla u_k|^{p_{k}(x)}dx\bigg)^{\frac{1}{k}}\leq |D|^{\frac{1}{m}-\frac{1}{k}}(C(\Omega,g))^{\frac{1}{k}}.
\end{align*}
Take $k$ large enough such that $|D|^{\frac{1}{m}-\frac{1}{k}}(C(\Omega,g))^{\frac{1}{k}}\leq2|D|^{\frac{1}{m}}$ holds. Then we have
\begin{equation}\label{m-norm-gradient-estimate}
\|\nabla u_k\|_{L^{m}(D)}\leq2|D|^{\frac{1}{m}}.
\end{equation}
From (\ref{m-norm-gradient-estimate}) and the Poincar\'{e}-Wirtinger inequality we have that $(u_k)$ is bounded in $W^{1,m}(D)$. This fact together with the pointwise convergence of $(u_k)$ to $u_{\infty}$, gives that  $u_k \xrightarrow[]{w} u_{\infty}$ in $W^{1,m}(D)$.

 Let $m>p^{+}$. From the weak lower semicontinuity of the integral and the H\"{o}lder inequality we have
\begin{align*}
\|\nabla u_{\infty}\|_{L^{m}(D,\mathbb{R}^{N})} &\leq\liminf_{k}\bigg(\smashoperator{\int_{D}}|\nabla u_k|^{m}dx\bigg)^{\frac{1}{m}}\\
&\leq\liminf_{k}\big[|D|^{\frac{1}{m}-\frac{1}{k}}\bigg(\smashoperator{\int_{D}}|\nabla u_k|^{k}dx\bigg)^{\frac{1}{k}}\big]\\
&\leq\liminf_{k}\big[|D|^{\frac{1}{m}-\frac{1}{k}}\bigg(C(\Omega,g)\bigg)^{\frac{1}{k}}\big]\\
&=|D|^{\frac{1}{m}}.
\end{align*}
Thus, passing to the limit as $m\r \infty$, we have $\|\nabla u_{\infty}\|_{L^{\infty}(D,\mathbb{R}^{N})}\leq 1$. The condition $\int_{\Omega}u_{\infty}=0$ is immediately satisfied since $\int_{\Omega}u_{k}=0$ for each $k$. Thus, $u_{\infty} \in S$.

It remains to show that $u_\infty$ minimizes $I_{\infty}$ in $S$. To this end, let  $v \in S$. Then by the minimizing property of $u_k$ in $S_k$ and the weak lower semicontinuity of $I_{\infty}$ we have
\begin{align*}
I_{\infty}(u_{\infty})&\leq\liminf_{k}I_{\infty}(u_k)\leq\liminf_{k}I_{k}(u_k)\\
&\leq\liminf_{k}I_{k}(v)\leq I_{k}(v).
\end{align*}
Thus, $u_{\infty}$ minimizes $I_{\infty}$ in $S$. To prove that $u_\infty$ is infinity harmonic in $D$ we use the fact that $u_k$ is $k$-harmonic in $D$ and $u_k\to u_\infty$ uniformly in $\overline{\Omega}$ (see \cite[Proposition 2.2]{tB89}, \cite{rJ93} or Theorem 2.8 in \cite[p. 17]{jK011}).
\end{proof}
\begin{rem}
The minimizer of $I_{\infty}$ in $S$ is unique. Indeed let $u_1, u_2$ be minimizers of $I_{\infty}$ in $S$ that are also infinity harmonic in $D$. Then, there exists some $C\in\mathbb{R}$ such that
\[
u_2= u_1 + C \quad\text{in}\quad \overline\Omega\backslash D,
\]
which by uniqueness of the Dirichlet problem for the infinity Laplacian gives us that $u_2= u_1 + C$ also in $D$. Hence $u_2= u_1 + C$ in $\Omega$ which implies uniqueness by the fact that the mean value of both $u_1$, $u_2$ is zero.
\end{rem}
In the following theorem we state the problem of which $u_\infty$ is a viscosity solution. This arises naturally from Lemma ~\ref{lemma:equivproblem} and Proposition ~\ref{prop:weaktoviscosity} as a limit case.
\begin{thm}\label{thm:limitcase}
Let $(u_k)$ be the sequence of solutions of  problems (1.k) and $u_{\infty}$ the uniform limit
of a subsequence of  $(u_k)$. Then $u_\infty$ is a viscosity solution of the following problem

\[
\begin{cases} -\Delta_{p(x)}u(x)=0, & x\in \Omega\backslash\overline{ D}
\\ -\Delta_{\infty}u(x)=0, & x\in D
 \\
 \textup{sgn}(|\nabla u(x)|-1)\textup{sgn}(\frac{\partial u}{\partial \nu}(x))=0, & x \in \partial D \\
 |\nabla u(x)|^{p(x)-2}\frac{\partial u}{\partial\nu}(x)=g(x), & x \in \partial\Omega
 \end{cases}
\]
\end{thm}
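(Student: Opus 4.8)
The plan is to use the stability of viscosity solutions under uniform convergence. By Proposition~\ref{prop:weaktoviscosity} each $u_k$ is a viscosity solution of (1.k), and by Theorem~\ref{thm:charact-of-u_inf} we have $u_k\to u_\infty$ uniformly on $\omk$, so $u_\infty$ is continuous. The viscosity notion for the limit problem is understood as the formal $k\to\infty$ limit of Definition~\ref{dfn:viscosity}. I will prove only the supersolution property, the subsolution one being symmetric with all inequalities and $\max/\sup$ replaced by $\min/\inf$. So let $\f\in C^2(\omk)$ be such that $u_\infty-\f$ attains a strict minimum at $x_0$ with $(u_\infty-\f)(x_0)=0$. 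By the standard argument, uniform convergence yields points $x_k\to x_0$ at which $u_k-\f$ has a local minimum; after subtracting the constant $(u_k-\f)(x_k)$ from $\f$ (and a harmless quartic bump to make the minimum strict) we may apply the supersolution inequalities of Definition~\ref{dfn:viscosity} at $x_k$ for all large $k$. The proof then reduces to selecting, for each location of $x_0$, the alternative of Definition~\ref{dfn:viscosity} that holds along a subsequence and passing to the limit in it.

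When $x_0\in\Omega\setminus\overline D$, resp.\ $x_0\in\partial\Omega$, the points $x_k$ remain in the same region for $k$ large; since $k>p^{+}$ forces $p_k\equiv p$ there, the inequality $-\Delta_{p(x_k)}\f(x_k)\geq0$, resp.\ the boundary alternative involving $g$, passes to the limit by continuity of $\f,\nabla\f,D^2\f,p$ and $g$, yielding exactly the supersolution conditions required by the first and last lines of the limit problem. When $x_0\in D$ we have $x_k\in D$ for $k$ large and the relevant inequality is $-\Delta_k\f(x_k)\geq0$. Expanding
\[
-\Delta_k\f=-|\nabla\f|^{k-2}\Delta\f-(k-2)|\nabla\f|^{k-4}\Delta_{\infty}\f,
\]
if $\nabla\f(x_0)\neq0$ I divide by the positive quantity $(k-2)|\nabla\f(x_k)|^{k-4}$ and let $k\to\infty$, the first term being $O(1/k)$, to obtain $-\Delta_{\infty}\f(x_0)\geq0$; if $\nabla\f(x_0)=0$ then $\Delta_{\infty}\f(x_0)=0$ automatically and the inequality is trivial. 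This is the classical passage from the $k$-Laplacian to the infinity Laplacian (cf.\ \cite{tB89}).

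The main obstacle is the transmission boundary $x_0\in\partial D$, where two difficulties combine. First, the touching points $x_k$ need not lie on $\partial D$: along various subsequences they may sit in $D$, in $\Omega\setminus\overline D$, or on $\partial D$, and each case must be treated separately, recovering respectively $-\Delta_{\infty}\f(x_0)\geq0$, $-\Delta_{p(x_0)}\f(x_0)\geq0$, or the flux alternative. Second, and more delicately, when $x_k\in\partial D$ one must control the limit of
\[
\bigl(|\nabla\f(x_k)|^{k-2}-|\nabla\f(x_k)|^{p(x_k)-2}\bigr)\nabla\f(x_k)\cdot\nu_k,\qquad \nu_k\in N(x_k).
\]
Writing $s:=|\nabla\f(x_0)|$, the coefficient is governed by the position of $s$ relative to $1$: for $s>1$ it diverges to $+\infty$, for $0<s<1$ it converges to the finite negative value $-s^{p(x_0)-2}$, and for $s=1$ it vanishes; the degenerate case $\nabla\f(x_0)=0$ is absorbed into the interior alternative as above. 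Tracking the sign of $\nabla\f(x_0)\cdot\nu$ across these three regimes is precisely the mechanism that produces, in the limiting $\max$-inequality, the compatibility condition $\textup{sgn}(|\nabla u|-1)\,\textup{sgn}(\tfrac{\partial u}{\partial\nu})=0$: a nonvanishing normal derivative forces $|\nabla u|=1$ in order that the flux term not obstruct the required nonnegativity. Assembling these regimes into the supersolution inequality of the limit boundary operator on $\partial D$, following the framework of \cite{Barles} and \cite{jM09}, and then repeating the symmetric analysis for the subsolution inequality, completes the $\partial D$ case and hence shows that $u_\infty$ is a viscosity solution of the stated problem.
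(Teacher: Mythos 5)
Your proposal is correct and follows essentially the same route as the paper: transferring the touching points via uniform convergence, dividing by $(k-2)|\nabla\f(x_k)|^{k-4}$ in $D$, splitting the $\partial D$ case into three subcases according to where the $x_k$ accumulate, and analysing the sign of the flux coefficient to arrive at the $\textup{sgn}$ condition. The only slight imprecision is the assertion that for $x_0\in\partial\Omega$ the points $x_k$ stay in the same region: they may fall into $\Omega\setminus\overline D$, but then the interior alternative of the limiting $\max$ passes to the limit instead, exactly as in the paper's subcase analysis.
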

\begin{proof}
Let $x_0 \in \overline{\Omega}$, $\f \in C^2(\omk)$ such that $u_{\infty}-\f$ attains its strict minimum at $x_0$. We will show that $u_{\infty}$ is a viscosity supersolution. The case of viscosity subsolution is exactly the same, so we omit the proof.  We consider the cases.\begin{itemize}
\item
Let $x_0 \in \Omega\backslash\overline{ D}$. Since $u_k \r u_{\infty}$ uniformly in $ \overline{\Omega}$, we can find a sequence $(x_k)$ in $\Omega\backslash\overline{ D}$ such that $x_k \r x_0$ and $u_k-\f$ attains its strict minimum at $x_k$ (see for instance \cite[Proposition 2.2]{tB89} or Theorem 2.8 in \cite[p. 17]{jK011}). Since $u_k$ is a viscosity solution of (1.k), we have that ${-\Delta_{p(x_k)}\f(x_k)\geq 0}$, for all $k>p^+$ and passing to the limit as $k\r\infty$ we obtain that $-\Delta_{p(x_0)}\f(x_0)\geq 0$.

\item
Let $x_0 \in D$. As before, we can find points $x_k$ in $D$ such that $x_k \r x_0$ and $u_k-\f$ attains its strict minimum at $x_k$. Since $u_k$ is a viscosity solution of (1.k), we have that
\[
-(|\nabla \f(x_k)|^{k-2}\Delta\f(x_k)+(k-2)|\nabla \f(x_k)|^{k-4}\Delta_{\infty}\f(x_k))\geq 0.
\]
If $\nabla\f(x_0)=0$, then $\Delta_{\infty}\f(x_0)=0$ and there is nothing to prove. Assume now that $\nabla \f(x_0)\neq 0$.  Then $\nabla\f(x_k)=0$, for large $k$ and  dividing the previous inequality with $(k-2)|\nabla \f(x_k)|^{k-4}$, we obtain

\[									
-\frac{|\nabla \f(x_k)|^{2}\Delta\f(x_k)}{k-2}-\Delta_{\infty}(x_k)\geq 0.
\]
Passing to the limit as, $k\r\infty$ we get
\[
-\Delta_{\infty}\f(x_0)\geq 0.
\]
Thus,  $u_{\infty}$ is viscosity supersolution of $-\Delta_{\infty}u=0$ in $D$.
\item
Let $x_0 \in \partial D$. Again following \cite{Barles} we need to show that,
\begin{align*}
\max\{& \Delta_{p(x_0)}\f(x_0), \Delta_{\infty}\f(x_0),\\
&\sup_{\nu\in N(x_0)}\{\text{sgn}(|\nabla \f(x_0)|-1)\text{sgn}(\nabla\f(x_0)\cdot\nu)\}\}\geq 0.
\end{align*}
Due to uniform convergence again we can find a sequence $(x_k)$ such that $x_k \r x_0$ and $u_k-\f$ attains its strict minimum at $x_k$. We consider the following cases.
\begin{itemize}
\item[(i)]
If infinitely many $x_k$ belong to $\Omega\backslash\overline{ D}$, then for large $k$, we have \[-\Delta_{p(x_k)}\f(x_k)\geq 0\]  and passing to the limit as $k\r\infty$, we obtain that
\[-\Delta_{p(x_0)}\f(x_0)\geq 0.\]
\item[(ii)]
If infinitely many $x_k$ belong to $D$, then for large $k$, we have \[-\Delta_{k}\f(x_k)\geq 0\] and proceeding as we did in the second case, we have $ -\Delta_{\infty}\f(x_0)\geq 0$.
\item[(iii)]
If infinitely many $x_k$ belong to $\partial D$, then from Proposition ~\ref{prop:weaktoviscosity} we get
\[
\sup_{\nu\in N(x_k)}\{(|\nabla \f(x_k)|^{k-2}-|\nabla \f(x_k)|^{p(x_k)-2})\nabla\f(x_k)\cdot\nu\}\geq 0.
\]
Hence,
\[
\sup_{\nu\in N(x_k)}\{(|\nabla \f(x_k)|^{k-p(x_k)}-1)\nabla\f(x_k)\cdot\nu\}\geq 0.
\]
This implies that
\[
\sup_{\nu\in N(x_0)}\{(|\nabla \f(x_0)|-1)\nabla\f(x_0)\cdot\nu\}\geq 0.
\]
But this is the case when
\[
\sup_{\nu\in N(x_0)}\{\text{sgn}(|\nabla \f(x_0)|-1)\text{sgn}(\nabla\f(x_0)\cdot\nu)\}\geq 0.
\]
\end{itemize}
\item
Let $x_0 \in \partial\Omega$.  We want to prove, that
\[
\max\{|\nabla \f(x_0)|^{p(x_0)-2}\frac{\partial\f}{\partial\nu}(x_0)-g(x_0), -\Delta_{p(x_0)}\f(x_0)\}\geq 0
\]
Again, we can find points $x_k$ in $\omk$ such that $x_k\r x_0$ and $u_{k}-\f$ has a strict minimum at $x_k$. If infinitely many $x_k$ belong to $\Omega$, then  $-\Delta_{p(x_k)}\f(x_k)\geq 0$ and passing to the limit as $k\r\infty$, we get $-\Delta_{p(x_0)}\f(x_0)\geq 0$. If infinitely many $x_k$ belong to $\partial\Omega$, from Proposition ~\ref{prop:weaktoviscosity}, we have
\[
\max\{|\nabla \f(x_k)|^{p(x_k)-2}\frac{\partial\f}{\partial\nu}(x_k)-g(x_k), -\Delta_{p(x_k)}\f(x_k)\}\geq 0.
\]
If $-\Delta_{p(x_k)}\f(x_k)\geq 0$, as before we conclude that \[-\Delta_{p(x_0)}\f(x_0)\geq 0.\]
If, \[|\nabla \f(x_k)|^{p(x_k)-2}\frac{\partial\f}{\partial\nu}(x_k)-g(x_k)\geq 0,\] passing to the limit as $k\r\infty$ and since $g\in C(\omk)$, we conclude that \[|\nabla \f(x_0)|^{p(x_0)-2}\frac{\partial\f}{\partial\nu}(x_0)-g(x_0)\geq 0\] and this completes the proof.

\end{itemize}

\end{proof}

\end{document}